\newtheorem{theorem}{Theorem}[section]
\newtheorem{lm}[theorem]{Lemma}
\newtheorem{tr}[theorem]{Theorem}
\newtheorem{cor}[theorem]{Corollary}
\newtheorem{rem}[theorem]{Remark}
\newtheorem{pr}[theorem]{Proposition}
\newtheorem{quest}[theorem]{Problem}
\newtheorem{ex}[theorem]{Example}
\begin{document}
\title[Minimal degrees of invariants of (super)groups]{Minimal degrees of invariants of (super)groups - a connection to cryptology}
\thanks{This publication was made possible by a NPRF award NPRP 6 - 1059 - 1 - 208 from the Qatar National Research Fund (a member of The Qatar Foundation). The statements made herein are solely the responsibility of the authors.}
\author{Franti\v sek~ Marko}
\email{fxm13@psu.edu}
\address{Penn State Hazleton, 76 University Drive, Hazleton, PA 18202, USA}
\author{Alexandr N. Zubkov}
\email{a.zubkov@yahoo.com}
\address{Sobolev Institute of Mathematics, Siberian Branch of Russian Academy of Science (SORAN), Omsk, Pevtzova 13, 644043, Russia}
\begin{abstract} 
We investigate questions related to the minimal degree of invariants of finitely generated diagonalizable groups. These questions were raised in connection to security of a public key cryptosystem based on invariants of diagonalizable groups. We derive results for minimal degrees of invariants of finite groups, abelian groups and algebraic groups. For algebraic groups we relate the minimal degree of the group to the minimal degrees of its tori.
Finally, we investigate invariants of certain supergroups that are superanalogs of tori. It is interesting to note that a basis of these invariants is not given by monomials.

\end{abstract}
\keywords{cryptosystem, invariants, diagonalizable group, number field, supergroup} 
\subjclass[2010]{94A60(primary), and 11T71(secondary)} 
%\MSC 94A60 \sep 11T71
\maketitle
%\footnotetext{\draftnote}
\section*{Introduction}
Let $G$ be a group, $V$ a vector space over a ground field $F$, and $G$ acts on $V$ by linear transformations.
The typical problem in the invariant theory of the group $G$ is to find an upper bound for degrees of generators of $F[V]^G$. 
For fields $F$ of characteristic zero, there is a classical result of Noether \cite{noeth} which states that the algebra of invariants of $G$ 
is generated by polynomials of degrees not exceeding the order of $G$.

In this paper we are investigating a different problem and replace a generating set of invariants of $G$ by a single nonconstant invariant of $G$.
Namely, we are interested in a question: whether there is a nonconstant invariant of $G$ of degree not exceeding a certain value.

This question is motivated by security consideration in \cite{mzj} related to a public-key cryptosystem based on invariants of diagonalizable groups.  
Since one possible atttack on this cryptosystem is based on brute-force linear algebra,
if we know that there is a nonconstant invariant of $G$ of small degree, then this linear algebra attack is sucessful.
On this other hand, if we know that there are no nonconstant invariants of $G$ of small degree, then the cryptosystem is secure against this type of attack.

It is easier to formulate and investigate this problem in terms of the minimal degree $M_{G,V}$ of invariants of the group $G$ with respect to the fixed representation 
$G\to GL(V)$. We will establish both lower and upper bounds for $M_{G,V}$.

We start by recalling the concept of an invariant of a group $G$ in Section 1. In Section 2 we describe the public-key cryptosystem based on invariants of $G$. In Section 3 we show that the minimal degree of an abelian group $G$ is the same as the minimal degree of its subgroup generated by semisimple elements. 
We also study minimal degrees of diagonalizable groups. 
In Section 4 we relate the minimal degree $M_{G,V}$ of an algebraic $G$ to the minimal degrees of invariants of its torus $T$. 
Afterward, we explain the concept of invariants of supergroups in Section 5.  In Section 6 we derive certain properties of invariants of certain supergroups. One interesting property is that, unlike for groups, the basis of invariants for supergroups does not consist of monomials.
 
\section{Invariants of finitely-generated linear groups}

In this paper, we will consider only finitely generated groups $G$ acting faithfully on a finite-dimensional vector space $V=F^n$ over a field $F$ of arbitrary characteristics.
Therefore, we can asume that $G\subset GL(V)$. From the very beginning, assume that the representation $\rho:G\to GL(V)$ is fixed, and the group $G$ is given by a finite set of generators. With respect to the standard basis of $V$, each element $g$ of $G$ is therefore represented by an invertible matrix of size $n\times n$, and $g$ acts on vectors in $V$ by matrix multiplication. 

Let $F[V]=F[x_1, \ldots, x_n]$ be the algebra of polynomial functions on $GL(V)$. Then $G$ acts on $F[V]$ via 
$gf(v)=f(g^{-1}v)$, where $g\in G$, $f\in F[V]$ and $v\in V$.
An invariant $f$ of $G$ is a polynomial $f\in F[V]$, 
which has a property that its values are the same on orbits of the group $G$. In other words, for every vector $v\in V$ and for every element $g\in G$, we have $f(gv)=f(v)$.
We note that different representations of $G$ lead to different invariants in general, but this is not going to be a problem for us since our representation of $G$ is fixed.
We will denote the algebra of invariants of $G$ by $F[V]^G$.

Denote by $M_{G,V}$, or simply by $M_G$ or $M$ if we need not emphasise the group $G$ or the vector space $V$ it is acting on
the minimal positive degree of an invariant from $F[V]^G$. That is
$M_{G, V}=\min\{d>0 | F[V]_d^G\neq 0\}$. If $F[V]^G=F$, then we set $M_{G,V}=\infty$.

\section{Public key-cryptosystem based on invariants}\label{system}

We start by recalling the original idea of the public-key cryptosystem based on invariants from the paper \cite{dima1} and recalling its modification presented in \cite{dima2}.

\subsection{Cryptosystems based on invariants}

To design a cryptosystem, Alice needs to choose a finitely generated subgroup $G$ of $GL(V)$ for some vector space $V=F^n$ and a set $\{g_1, \ldots, g_t\}$ of generators of $G$. 
Alice also chooses an $n\times n$ matrix $a$. Alice needs to know a polynomial invariant $f: v\mapsto f(v)$ of this representation of $G$. Then the polynomial $af:v\mapsto f(av)$ 
is an invariant of the conjugate group $H=a^{-1}Ga$.

Depending on the choice $f$ and $a$, Alice chooses a set $M=\{v_0, \ldots, v_{s-1}\}$ of messages consisting of vectors from $V$ that are separated by the polynomial $af$.
This means that $f(av_i)\neq f(av_j)$ whenever $i\neq j$.

Alice also chooses a set of randomly generated elements $g_1, \ldots, g_m$ of $G$ (say, by multiplying some of the given generators of $G$), which generates a subgroup of $G$ that will be denoted by $G'$. 

Alice announces as a public key the set $M$ of possible messages, and the group $H=a^{-1}G'a$, conjugated to $G'$, by announcing its generators $h_i=a^{-1}g_ia$ for $i=1, \ldots, m$.

In the first paper \cite{dima1} its author assumes that the group $G$, its representation in $GL(V)$ and the invariant $f$ are in the public key. We refer to this setup as {\it variant one}. However, the version in paper \cite{dima2} assumes that $G$, its representation in $GL(V)$ and the invariant $f$ are secret.  We refer to this setup as {\it variant two}. We will comment on both variants later.

For the encryption, every time Bob wants to transmit a message $m\in M$, he chooses a randomly generated element $h$ of the group $H$(by multiplying some of the generators of $H$ given as a public key). 
Then he computes $u=hv_i$ and transmits the vector $u\in V$ to Alice.

To decript the message, Alice first computes $au$ and then applies the invariant $f$. (Of course this is the same as an application of the invariant $af$ of $H$ that separates elements of $M$). If $u=hv_i$, then $f(au)=f(ahv_i)=f(aa^{-1}gav_i)=f(gav_i)=f(av_i)$. Since $a$ was chosen so that
$f(av_i)\neq f(av_j)$ whenever $i\neq j$, Alice can determine from the value of $f(au)$ whether the symbol $v_i$ and the corresponding message that was encrypted by Bob.

\subsection{Design and modification of the cryptosystem based on invariants}

There is an obvious modification of the above cryptosystem which improves the ratio of the expansion in size from plaintext to ciphertext, namely replacing the set of two elements $v_0$ and $v_1$ from $V$ by a larger set $S=\{v_0, \ldots, v_{r-1}\}$, such that the invariant $f$ separates every two elements of $aS=\{av_0, \ldots, av_{r-1}\}$ instead. 

The paper \cite{mzj} studies cryptosystems based on invariants of finitely generated groups $G$ and considers advantages and disadvantages of various choices of $G$. Most notable is the distinction between diagonalizable and unipotent groups as well between finite and infinite groups. The behaviour of the cryptosystem varies based on the choice of the underlying ground field $F$ or residue ring $R$. 
When working over finite field, the cyclicity of the multiplicative group $F^{\times}$ plays a big role and security of the cryptosystem is related to the discrete logarithm problem. 
When $F$ is a number field, then the factorization properties in the ring of its integers $Z$ come into forefront. Finally, in the case of a residue ring $R$ of a ring of algebraic integers $Z$ modulo its ideal $\mathfrak{a}$, we work over a group of units of a finite ring and their multiplicative structure is more involved than that for a finite field. This case also involves questions related to factorization in the ring of algebraic integers $Z$ and is therefore a mixture between the previous two cases.

\subsection{Linear algebra attack on the cryptosystem}\label{LA}

The notion of the minimal positive degree of an invariant and the value of $M=M_{G,V}$ are important for the security of the invariant-based cryptosystem (both variants one and two) we are considering.
For example, if we know that $M_G$ is so small that $m\binom{n+M-1}{M}=O(n^r)$ is polynomial in $n$, then Charlie can find an invariant $f'$ of $G$
in polynomial time by solving consecutive linear systems for $d=1, \ldots, \binom{n+M-1}{M}$, each consisting of $m\binom{n+d-1}{d}$ equations in the $\binom{n+d-1}{d}$ variables described in the previous section. For a fixed $d$, this can be accomplished in time $O(m(\binom{n+d-1}{d})^4)$ and the total search will take no more than time
$O(n^{8r})$.
Therefore, for the security of the system it must be guaranteed that $m\binom{n+M-1}{M}$ is high, say, it is not polynomial in $n$.

\section{Lower bounds for degrees of polynomial invariants}

The significance of understanding the minimal degree $M_{G,V}$ of invariants for the security of the invariant-based cryptosystem was established above. In particular, it is important to find a nontrivial lower bound for $M_{G,V}$.  Unfortunately, we are not aware of any articles establishing lower bounds for the minimal degree of invariants, except in very special circumstances, e.g. \cite{huf}.

On the other hand, there are numerous upper bounds for the minimal degree $\beta(G,V)$ such that $F[V]^G$ is generated as an algebra by all invariants in degrees 
not exceeding $\beta(G,V)$. 
For example, a classical result of Noether \cite{noeth} states that if the characteristic of $F$ is zero and $G$ is finite of order $|G|$, then $\beta(G,V)\leq |G|$. 
There is an extensive discussion of Noether bound and results about $\beta(G,V)$ in section 3 of \cite{smith}.
It was conjectured by Kemper that for $G\neq 1$, and arbitratry ground field $F$, the number $\beta(G, V)$ is at most $\dim V(|G|-1)$. Recently, this conjecture was proved by Symonds in \cite{sym}.

When one wants to find an invariant of $G$, it seems natural to consider an upper bound $\beta(G,V)$. However, if we want to show that there are no invariants of small degrees 
(as is our case), then we need to find lower bounds for $M_{G,V}$. Until now, there was no real impetus to consider such a problem.

Assume again that $G$ is a (finitely generated) subgroup of $GL(V)$, and denote $M_{G, V}$ just by $M_G$.

Denote by $\mathbb{G}=\overline{G}$ the Zariski closure of $G$. We will assume that $\mathbb{G}$ is a linearly reductive subgroup in $GL(V)$
(in particular, this assumption is satisfied if $G$ is a finite group and the characteristic of $F$ does not divide $|G|$). 
According to \cite{hochrob} (see also \cite{derkraft}),  $F[V]^G=F[V]^{\mathbb{G}}$ is a Cohen-Macaulay algebra. Therefore
$F[V]^G$ is a free module over its subalgebra $F[p_1, \ldots , p_s]$, freely generated by the (homogeneous) parameters $p_1, \ldots , p_s$, which are called the first generators.
In other words, $F[V]^G=\oplus_{1\leq i\leq l}F[p_1, \ldots , p_s]h_i$, where $h_1 , \ldots, h_l$ are called the second generators.
If $F[V]^G\neq F$, then $M_G=\min\{\{\deg h_i > 0\},\{\deg p_j\}\}$.
%\end{rem}

In what follows we will denote by $\zeta_k$ a primitive root of unity of order $k$. If the order $k$ is clear from the context, we will denote it just by $\zeta$. 
Additionally, every time $\zeta_k$ is mentioned, we assume that it is an element of the ground field $F$.

If a matrix $g\in GL(V)$ has a finite order $k$, then all eigenvalues $\lambda_1, \ldots , \lambda_n$ of $g$ are roots of unity.
If we denote $\zeta=\zeta_k$, then there are integers $k_i$ such that $\lambda_i=\zeta^{k_i}$, where $0\leq k_i < k$ and $gcd(k_1, \ldots , k_n, k)=1$.
For $g\neq 1$ denote by $k_g$ the positive integer 
\[k_g=\min\{\sum_{i=1}^n a_i >0 | \sum_{i=1}^n a_i k_i \equiv 0\!\!\!\!\pmod k, \mbox{ where integers } a_1, \ldots , a_n \geq 0\}.\]

The following lemma describes invariant polynomials and $M_{\langle t \rangle}$ for a diagonal matrix $t$ of finite order.

\begin{lm}\label{suchandsuch}
Assume that $t$ is a diagonal matrix of the finite order $k$ with diagonal entries $\lambda_1=\zeta_k^{k_1}, \ldots, \lambda_n=\zeta_k^{k_n}$, where the exponents $k_i$ are as above. 
Then the invariant subalgebra of $F[V]^{\langle t \rangle}$ is generated by monomials $x^a=x_1^{a_1}\ldots x_n^{a_n}$ such that $\sum_{i=1}^n a_ik_i \equiv 0 \pmod k$.
Additionally, if $t\neq 1$, then $M_{\langle t \rangle}=k_t$.
\end{lm}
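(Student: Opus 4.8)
The plan is to exploit the fact that a diagonal matrix acts diagonally on the monomial basis of $F[V]$, so that the entire problem reduces to bookkeeping with exponents modulo $k$.

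First I would compute the action of $t$ on the coordinate functions. Since $t$ is diagonal with $t e_i = \lambda_i e_i$, the defining action $gf(v)=f(g^{-1}v)$ gives
\[
t\cdot x_i = \lambda_i^{-1} x_i = \zeta_k^{-k_i} x_i .
\]
Multiplicativity of the action then yields, for any monomial $x^a = x_1^{a_1}\cdots x_n^{a_n}$,
\[
t\cdot x^a = \zeta_k^{-\sum_{i=1}^n a_i k_i}\, x^a,
\]
so every monomial is an eigenvector of $t$, with eigenvalue $1$ precisely when $\sum_{i=1}^n a_i k_i \equiv 0 \pmod k$.

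Next I would upgrade this pointwise statement to a description of the whole invariant algebra. Writing an arbitrary $f\in F[V]$ in its monomial expansion $f=\sum_a c_a x^a$, applying $t$ and comparing coefficients (using the linear independence of monomials), one sees that $f$ is invariant if and only if $c_a=0$ whenever $\sum_{i=1}^n a_i k_i\not\equiv 0\pmod k$. Hence the invariant monomials span $F[V]^{\langle t\rangle}$ as a vector space; since a product of invariant monomials is again an invariant monomial, they also generate it as an algebra, which is the first assertion.

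Finally, for $M_{\langle t\rangle}$ I would simply read off degrees. Because each homogeneous component $F[V]^{\langle t\rangle}_d$ is spanned by the invariant monomials of degree $d$, it is nonzero exactly when there exist nonnegative integers $a_1,\ldots,a_n$ with $\sum_{i=1}^n a_i = d$ and $\sum_{i=1}^n a_i k_i\equiv 0\pmod k$. Taking the minimal such positive $d$ gives precisely $k_t$ by its definition; one checks $k_t$ is finite (for instance $a_i=k$ for a single index, the rest zero, works), so the minimum is attained. I do not expect a genuine obstacle here: the lemma is essentially a diagonalization argument. The only point requiring care is the passage from ``$t$ fixes each invariant monomial'' to ``every invariant is a combination of invariant monomials,'' which rests on the linear independence of monomials together with the fact that $t$ preserves multidegree.
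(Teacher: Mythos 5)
Your proposal is correct and follows essentially the same route as the paper: compute $t\cdot x_i=\lambda_i^{-1}x_i$, observe that every monomial is a semi-invariant (eigenvector) of $t$ so that the invariant monomials span and generate $F[V]^{\langle t\rangle}$, and then read off $M_{\langle t\rangle}=k_t$ from the definition of $k_t$. The paper's proof is just a terser version of the same argument, and your extra care about passing from semi-invariance of monomials to the description of all invariants is exactly the point the paper compresses into one sentence.
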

\begin{proof}
The properties of numbers $k_i$ follow immediately. 
Since $t$ acts on the corresponding coordinate function as $tx_i=\lambda_i^{-1}x_i$, we obtain that a monomial $x^a=x_1^{a_1}\ldots x_n^{a_n}$ 
is a invariant of $F[V]$ if and only if $\sum_{i=1}^n a_ik_i \equiv 0 \pmod k$.
Because every monomial $x^b$ is a semi-invariant of $t$, monomials $x^a$ as above generate $F[V]^{\langle t \rangle}$. The formula for $M_{\langle t \rangle}$ is then clear.
\end{proof}

For the next lemma we apply standard results from algebraic group theory, that can be found, for example, in \cite{hum, water}. Assume that $F$ is a perfect field.
For an element $g\in G$ let $g=g_s g_u$ be its Jordan-Chevalley decomposition. Let $G_s$ and $G_u$ denote the sets of semisimple and unipotent components of all elements from $G$, respectively.
\begin{lm}\label{the caseofcyclicgroup}
Assume that the ground field $F$ is perfect. If a group $H$ is abelian, then $M_H=M_{<H_s>}$. 
In particular, if $H$ is an abelian subgroup of $G$, then $M_{<H_s>}\leq M_G$.
\end{lm}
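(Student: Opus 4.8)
The plan is to pass to the Zariski closure and then separate the semisimple and unipotent contributions, so that the minimal degree is controlled by the semisimple part alone. Throughout write $\mathbb{H}=\overline{H}$ for the Zariski closure. The first ingredient I would use is the elementary fact that for any fixed $f\in F[V]$ the set $\{g\in GL(V): gf=f\}$ is Zariski closed; since it contains $H$ it contains $\mathbb{H}$, and therefore $F[V]^H=F[V]^{\mathbb{H}}$. In particular the stabilizer of an $H$-invariant $f$ contains the semisimple and unipotent parts $g_s,g_u$ of every $g\in H$, because $g_s,g_u\in\overline{\langle g\rangle}\subseteq\mathbb{H}$.

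Next I would record the structural consequence of $H$ being abelian. For commuting $g,h\in H$ the parts $g_s,g_u,h_s,h_u$ commute pairwise (anything commuting with $g$ commutes with $g_s$ and $g_u$), so the sets $H_s$ and $H_u$ consist of pairwise commuting operators and $K:=\langle H_s,H_u\rangle$ is an abelian group with $K=\langle H_s\rangle\,\langle H_u\rangle$. Since $H\subseteq K\subseteq\mathbb{H}$, the previous paragraph gives $F[V]^H=F[V]^K$, and because $K$ is generated by $\langle H_s\rangle$ together with $\langle H_u\rangle$ we get the clean description
\[
F[V]^H=F[V]^K=F[V]^{\langle H_s\rangle}\cap F[V]^{\langle H_u\rangle}=\bigl(F[V]^{\langle H_s\rangle}\bigr)^{\langle H_u\rangle}.
\]
From the inclusion $F[V]^H=F[V]^K\subseteq F[V]^{\langle H_s\rangle}$ the inequality $M_{\langle H_s\rangle}\le M_H$ is immediate.

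The heart of the argument is the reverse inequality $M_H\le M_{\langle H_s\rangle}$, and this is where I expect the main work to lie. Set $d=M_{\langle H_s\rangle}$ and consider the finite-dimensional space $W=F[V]^{\langle H_s\rangle}_d$, which is nonzero by the choice of $d$ (if $d=\infty$ then $F[V]^{\langle H_s\rangle}=F$ forces $F[V]^H=F$ and both minimal degrees are $\infty$). Since each element of $\langle H_u\rangle$ commutes with every element of $\langle H_s\rangle$ and preserves the grading, $\langle H_u\rangle$ acts on $W$; moreover every element of $\langle H_u\rangle$ is unipotent (a product of commuting unipotents), so it acts on $F[V]_d=\mathrm{Sym}^d(V^*)$, hence on $W$, by a unipotent operator. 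Now I would invoke Kolchin's theorem: a group of unipotent operators on a nonzero finite-dimensional vector space has a common nonzero fixed vector. Applying it to $\langle H_u\rangle$ acting on $W$ yields a nonzero $f\in W$ that also lies in $F[V]^{\langle H_u\rangle}$, hence $f\in F[V]^K_d=F[V]^H_d$. Thus $M_H\le d=M_{\langle H_s\rangle}$, and combined with the previous paragraph we obtain $M_H=M_{\langle H_s\rangle}$.

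Finally, for the ``in particular'' statement: if $H$ is an abelian subgroup of $G$ then $F[V]^G\subseteq F[V]^H$, so $M_H\le M_G$, and the equality just proved gives $M_{\langle H_s\rangle}=M_H\le M_G$. The only genuinely delicate point is the Kolchin step, since it is precisely what lets one upgrade an invariant of the semisimple part of minimal degree to an honest invariant of $H$ of the same degree; everything else is bookkeeping with the Jordan--Chevalley decomposition and Zariski closures, and it is here that perfectness of $F$ is used to guarantee that the parts $g_s,g_u$ are again defined over $F$.
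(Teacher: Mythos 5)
Your proposal is correct and follows essentially the same route as the paper: both separate the abelian group into its semisimple and unipotent parts, identify $F[V]^H$ with the unipotent-invariants of $F[V]^{\langle H_s\rangle}$, and conclude via the fixed-point property of unipotent groups (your explicit appeal to Kolchin's theorem is exactly the step the paper invokes implicitly when it says that $F[V]_d^{\overline{\langle H_s\rangle}}\neq 0$ forces $(F[V]_d^{\overline{\langle H_s\rangle}})^{\overline{\langle H_u\rangle}}\neq 0$). The only cosmetic difference is that the paper phrases the decomposition at the level of the Zariski closure $\mathbb{H}=\mathbb{H}_s\times\mathbb{H}_u$, whereas you work with the abstract subgroups generated by the Jordan components.
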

\begin{proof}
Since the algebraic group $\mathbb{H}=\overline{H}$ is abelian, it can be written as a product 
$\mathbb{H}=\mathbb{H}_s\times \mathbb{H}_u$ of its closed subgroups $\mathbb{H}_s$ and $\mathbb{H}_u$. The inclusions $H_s\subseteq \mathbb{H}_s$ and $H_u\subseteq \mathbb{H}_u$
imply that $\mathbb{H}_s=\overline{<H_s>}$ and $\mathbb{H}_u=\overline{<H_u>}$.

Furthermore, $F[V]^H=F[V]^{\mathbb{H}}=(F[V]^{\overline{<H_s>}})^{\overline{<H_u>}}$. Since the group $\overline{<H_u>}$ is unipotent,
$F[V]_d^{\overline{<H_s>}}\neq 0$ implies $F[V]_d^{\mathbb{H}}=(F[V]_d^{\overline{<H_s>}})^{\overline{<H_u>}}\neq 0$. This means that $M_H=M_{\overline{<H_s>}}=M_{<H_s>}$. 

Since $H\leq G$ implies $M_H\leq M_G$, the second statement follows.
\end{proof}

A subgroup $G$ of $GL(V)$ is called {\it small}, if there is an abelian subgroup $H$ of $G$ such that $M_G =M_H$.

\begin{lm}\label{aboundforoneelement}
Assume that the ground field $F$ is perfect.
If $g\neq 1$ is of finite order, then $M_{<g>}=k_g$. In particular, if $G$ is finite, then $\max\{k_g ; g\in G, g\neq 1\}\leq M_G$.
\end{lm}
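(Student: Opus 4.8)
The plan is to reduce the computation of $M_{\langle g\rangle}$ to the diagonal case already settled in Lemma \ref{suchandsuch}, using Lemma \ref{the caseofcyclicgroup} to discard the unipotent part and a change of basis to diagonalize the semisimple part. First I would observe that $\langle g\rangle$ is cyclic, hence abelian, so Lemma \ref{the caseofcyclicgroup} applies and gives $M_{\langle g\rangle}=M_{\langle(\langle g\rangle)_s\rangle}$. Since the semisimple part of $g^m$ is $g_s^m$, the semisimple components of the elements of $\langle g\rangle$ generate exactly $\langle g_s\rangle$, so this reads $M_{\langle g\rangle}=M_{\langle g_s\rangle}$. Because $F$ is perfect, $g_s$ is genuinely semisimple over $F$, and its eigenvalues are the eigenvalues $\lambda_i=\zeta^{k_i}$ of $g$, which lie in $F$ by our standing assumption on $\zeta$; hence $g_s$ is diagonalizable over $F$.

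Next I would record the easy but necessary fact that $M$ is invariant under conjugation. If $g'=P^{-1}gP$ for some $P\in GL(V)$, then $f\mapsto f\circ P$ is a degree-preserving automorphism of $F[V]$ restricting to an isomorphism between $F[V]^{\langle g\rangle}$ and $F[V]^{\langle g'\rangle}$, so $M_{\langle g\rangle}=M_{\langle g'\rangle}$. Applying this to a matrix $P$ diagonalizing $g_s$, I obtain $M_{\langle g_s\rangle}=M_{\langle t\rangle}$ for the diagonal matrix $t=\mathrm{diag}(\lambda_1,\dots,\lambda_n)$, and Lemma \ref{suchandsuch} then yields $M_{\langle t\rangle}=k_t$. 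It remains to check $k_t=k_g$: by Lemma \ref{suchandsuch} a monomial $x^a$ is $t$-invariant iff $\prod_i\lambda_i^{a_i}=1$, and since $\lambda_i=\zeta_k^{k_i}$ with $\zeta_k$ a primitive $k$-th root of unity, this condition is exactly $\sum_i a_ik_i\equiv 0\pmod k$. Thus both $k_t$ and $k_g$ equal $\min\{\sum_i a_i>0 : \prod_i\lambda_i^{a_i}=1,\ a_i\ge 0\}$, so they coincide and $M_{\langle g\rangle}=k_g$, proving the first assertion. (Note $k_g$ is finite, e.g.\ $a_i=k$ for all $i$ works, matching the fact that a nontrivial finite cyclic group always has a nonconstant invariant.)

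For the second assertion I would use monotonicity of $M$ under passage to subgroups. If $G$ is finite and $g\in G$ with $g\neq 1$, then $g$ has finite order and $\langle g\rangle\le G$, so $F[V]^G\subseteq F[V]^{\langle g\rangle}$ forces $M_{\langle g\rangle}\le M_G$; combined with the first part this gives $k_g=M_{\langle g\rangle}\le M_G$ for every nontrivial $g$, and taking the maximum over such $g$ finishes the proof. I expect no serious obstacle here, since the conceptual work is entirely contained in Lemmas \ref{suchandsuch} and \ref{the caseofcyclicgroup}. The only points genuinely needing their own short argument are the conjugation-invariance of $M$ used to pass from $g_s$ to its diagonalization, together with the verification that $k_t=k_g$ despite $t$ and $g$ a priori having different orders; once the change-of-basis isomorphism is in hand, both reduce to the observation that the defining condition for $k_\bullet$ is really the multiplicative relation $\prod_i\lambda_i^{a_i}=1$ among the eigenvalues.
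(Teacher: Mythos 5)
Your proof is correct and follows essentially the same route as the paper's: reduce to the semisimple part via Lemma \ref{the caseofcyclicgroup}, diagonalize $g_s$ in an eigenbasis, apply Lemma \ref{suchandsuch}, and observe that $k_g=k_{g_s}$ because the defining congruence only depends on the multiset of eigenvalues. The paper's proof is just a terser version of yours, leaving implicit the conjugation-invariance of $M$ and the subgroup monotonicity $M_{\langle g\rangle}\leq M_G$ that you spell out.
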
 
\begin{proof}
Lemma \ref{the caseofcyclicgroup} implies $M_{<g>}=M_{<g_s>}$. With respect to a basis of $V$, consisting of eigenvectors of $g_s$, $g_s$ is represented by a diagonal matrix.
By Lemma \ref{suchandsuch} we obtain $M_{<g_s>}=k_{g_s}$. Since $k_{<g>}=k_{<g_s>}$, the lemma follows.
%Since $g\neq 1$, the invariant subalgebra $F[V]^{<g_s>}$ is generated by the monomials $x^{a}=x_1^{a_1}\ldots x_n^{a_n}$ such that $\sum_{i=1}^n a_i k_i = 0\pmod k$. 
\end{proof}
The following lemma is well-known, see \cite{burn}.

\begin{lm}\label{burn}
If $G\subset GL_n(\mathbb{R})$ and $G$ is finite, then $G$ has an invariant of degree two.
\end{lm}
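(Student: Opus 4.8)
The plan is to produce the required invariant by the classical averaging (Reynolds) construction, starting from the standard Euclidean quadratic form and symmetrizing it over the finite group $G$.

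First I would take $q_0(v)=\sum_{i=1}^n x_i^2 = v^{\mathsf{T}}v$, the standard positive definite quadratic form on $V=\mathbb{R}^n$, and form its average over the group,
\[
q(v)=\sum_{g\in G} q_0(gv)=\sum_{g\in G}(gv)^{\mathsf{T}}(gv).
\]
Each summand $q_0(gv)=\|gv\|^2$ is a homogeneous polynomial of degree two in the coordinates of $v$, so $q$ is a homogeneous polynomial of degree at most two. (No normalizing factor $\tfrac{1}{|G|}$ is needed, since we only wish to exhibit a nonconstant invariant.)

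Next I would verify that $q$ is $G$-invariant. For any $h\in G$, reindexing the group sum via $g\mapsto gh$ gives $q(hv)=\sum_{g\in G}q_0(ghv)=\sum_{g'\in G}q_0(g'v)=q(v)$, which is exactly the invariance condition $q(hv)=q(v)$ from Section 1. Hence $q\in F[V]^G$.

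Finally, and this is the only step that uses the ground field $\mathbb{R}$ in an essential way, I would check that $q$ is a genuine nonconstant invariant rather than the zero polynomial. Since each $g\in GL_n(\mathbb{R})$ is invertible, $v\neq 0$ forces $gv\neq 0$, hence $\|gv\|^2>0$; summing over $g\in G$ yields $q(v)>0$ for every nonzero $v$. Thus $q$ is positive definite, in particular nonzero and homogeneous of degree exactly two, so it is an invariant of degree two as claimed. The main (and essentially the only) obstacle is this nonvanishing: over a field that is not formally real, or one whose characteristic divides $|G|$, the symmetrized form could degenerate or collapse to zero, and it is precisely the positivity of sums of squares in $\mathbb{R}$ that rules this out.
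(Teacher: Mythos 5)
Your proposal is correct and follows essentially the same route as the paper: both average the standard positive definite form $t_1^2+\cdots+t_n^2$ over the finite group and use positivity of sums of squares over $\mathbb{R}$ to conclude that the resulting degree-two invariant is nonzero (indeed positive definite). The only cosmetic difference is that you sum $q_0(gv)$ over $g\in G$ while the paper sums the translates $g\cdot q_0$ under the action $gf(v)=f(g^{-1}v)$, which gives the identical polynomial after reindexing.
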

\begin{proof}
Let $g_1=1, \ldots, g_s$ be all elements of $G$ and $\mathbb{R}[V]=\mathbb{R}[t_1, \ldots, t_n]$. Denote by $x_i=g_i(t_1^2+\ldots +t_n^2)$ for $i=1, \ldots, s$.  Since values of 
each $x_i$ are non-negative when evaluated as polynomials in $t_1, \ldots, t_n$, the values of the invariant polynomial $\sum_{i=1}^s x_i$ 
evaluated as polynomial in $t_1, \ldots, t_n$ are non-negative and they can be equal to zero only if each 
$x_i$ is zero. But $x_1=0$ only if $t_1=\ldots =t_n=0$. Therefore $\sum_{i=1}^s x_i$ is positive definite quadratic form in $t_1, \ldots, t_n$, 
hence a non-zero invariant of $G$.
\end{proof}

Lemma \ref{aboundforoneelement} and Lemma \ref{burn} have the following interesting consequence.
\begin{cor}\label{F=R}
Let $g\neq 1$ correspond to a matrix from $GL_n(\mathbb{R})$ of finite order. Then either one of the eigenvalues of $g$ equals $1$ or there are two
eigenvalues $\lambda$ and $\mu$ of $g$, both different from $1$ such that $\lambda\mu =1$.  
\end{cor}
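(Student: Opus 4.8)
The plan is to combine the two quoted lemmas to bound $M_{\langle g \rangle}$ and then read the conclusion directly off the arithmetic definition of $k_g$. First I would observe that since $g$ has finite order, the cyclic group $\langle g \rangle$ is a finite subgroup of $GL_n(\mathbb{R})$; by Lemma \ref{burn} it carries a nonzero invariant of degree two, so $M_{\langle g\rangle}\le 2$. On the other hand, Lemma \ref{aboundforoneelement} identifies $M_{\langle g\rangle}$ with $k_g$. Hence $k_g\le 2$, and everything reduces to unpacking what this inequality says about the eigenvalues.

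Next I would unravel the definition of $k_g$. Because $g$ is of finite order it is semisimple, so after passing to an eigenbasis over $\mathbb{C}$ its eigenvalues are $\lambda_i=\zeta_k^{k_i}$, and by Lemma \ref{suchandsuch} the exponent condition $\sum_i a_i k_i\equiv 0\pmod k$ is exactly the statement $\prod_i \lambda_i^{a_i}=1$. The inequality $k_g\le 2$ therefore produces nonnegative integers $a_1,\dots,a_n$ with $1\le\sum_i a_i\le 2$ and $\prod_i\lambda_i^{a_i}=1$.

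Then I would run the short case analysis on the degree $\sum_i a_i$. If $k_g=1$, a single $a_j=1$ forces $\lambda_j=1$, i.e.\ $1$ is an eigenvalue, which is the first alternative. If $k_g=2$, then no eigenvalue equals $1$, and either two distinct indices $j\ne l$ carry $a_j=a_l=1$, giving eigenvalues $\lambda_j,\lambda_l\ne 1$ with $\lambda_j\lambda_l=1$, or a single index carries $a_j=2$, forcing $\lambda_j^2=1$ and hence $\lambda_j=-1$, in which case $\lambda=\mu=-1$ satisfies $\lambda\mu=1$.

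The only delicate point is this last sub-case: when the minimal invariant is the square $x_j^2$ of a single coordinate, the two eigenvalues of the conclusion coincide ($\lambda=\mu=-1$), so one must read the statement as permitting $\lambda=\mu$. This is unavoidable, as the example $g=-1$ in $GL_1(\mathbb{R})$ already shows. I expect the rest of the argument to be routine once $k_g\le 2$ is in hand; note that the reality of the matrix never enters the case analysis explicitly but is used through Lemma \ref{burn}, which is precisely what guarantees the degree-two invariant and hence the bound $k_g \le 2$.
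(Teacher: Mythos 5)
Your argument is correct and is exactly the derivation the paper intends: the corollary is stated there without an explicit proof, merely as a consequence of Lemma \ref{aboundforoneelement} and Lemma \ref{burn}, and your combination of $M_{\langle g\rangle}=k_g$ with the degree-two invariant, followed by the case analysis on $k_g\le 2$, supplies precisely that reasoning. Your observation that the sub-case $a_j=2$ yields $\lambda=\mu=-1$ (so the ``two eigenvalues'' of the conclusion may coincide, as $g=-1$ in $GL_1(\mathbb{R})$ already forces) is a fair and necessary reading of the statement.
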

\begin{lm}
Let $G$ be a finite abelian group of an exponent $q$, the ground field $F$ is perfect and $char F$ does not divide $q$. 
Then for every $G$-module $V$ one has the upper bound $M_{G, V}\leq q$. This upper bound is sharp.
\end{lm}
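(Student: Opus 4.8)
The plan is to diagonalize the abelian group and read off a low-degree invariant from a single weight, after first arranging that the ground field contains enough roots of unity. Since $G$ is finite abelian, its exponent $q$ and its order $|G|$ have the same prime divisors, so the hypothesis $\mathrm{char}\,F\nmid q$ forces $|G|$ to be invertible in $F$. The Reynolds operator $R=|G|^{-1}\sum_{g\in G}g$ is then a degree-preserving $F$-linear projection of $F[V]$ onto $F[V]^G$ which commutes with extension of scalars; consequently $\dim_F F[V]^G_d=\dim_{\overline F}\overline F[V]^G_d$ for every $d$, and $M_{G,V}$ is unchanged if we replace $F$ by its algebraic closure. I may therefore assume $F=\overline F$, so that all $q$-th roots of unity lie in $F$.

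Over this $F$, every $g\in G$ satisfies $g^q=1$, hence its minimal polynomial divides the separable polynomial $X^q-1$ and $g$ is semisimple; being a commuting family, $G$ is then simultaneously diagonalizable. Assuming as we may that $V\neq 0$ and choosing coordinates $x_1,\dots,x_n$ dual to a common eigenbasis, each $x_i$ is a semi-invariant, $g\cdot x_i=\omega_i(g)\,x_i$ for a character $\omega_i\colon G\to F^{\times}$. Exactly as in Lemma \ref{suchandsuch}, the invariant monomials span $F[V]^G$, a monomial $x^a$ being invariant precisely when $\prod_i\omega_i^{a_i}$ is the trivial character. Since $G$ has exponent $q$, each $\omega_i$ satisfies $\omega_i^q=1$, so its order $d_i$ divides $q$; therefore $x_i^{d_i}$ is a nonzero invariant of degree $d_i\le q$, and $M_{G,V}\le q$. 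By the first paragraph this bound holds over the original field as well.

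To see that the bound is sharp, let $G=\langle g\rangle$ be cyclic of order $q$ acting on $V=F^1$ (with $\zeta_q\in F$) by $g\cdot v=\zeta_q v$. Here the single eigenvalue is $\zeta_q=\zeta_q^{k_1}$ with $k_1=1$ and $\gcd(k_1,q)=1$, so Lemma \ref{aboundforoneelement} gives $M_{G,V}=k_g=\min\{a_1>0 : a_1\equiv 0 \pmod{q}\}=q$, while $G$ plainly has exponent $q$. The mathematical content of the upper bound is slight: the only genuine point is the role of the ground field, and the main obstacle is to justify that passing to $\overline F$ leaves $M_{G,V}$ unchanged, which is exactly where the hypotheses that $F$ be perfect and that $\mathrm{char}\,F\nmid q$ keep the earlier lemmas and the base-change step available. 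Everything after the simultaneous diagonalization is immediate.
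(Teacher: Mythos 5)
Your proof is correct and follows essentially the same route as the paper's: reduce to a simultaneously diagonalized abelian group, observe that each coordinate is a semi-invariant whose character has order dividing $q$ so some power $x_i^{d_i}$ with $d_i\mid q$ is invariant, and exhibit sharpness by letting a generator act as the scalar $\zeta_q$. The only difference is bookkeeping: you justify passing to $\overline F$ explicitly via the Reynolds operator (and note semisimplicity from separability of $X^q-1$), whereas the paper invokes its Lemma \ref{the caseofcyclicgroup} together with its standing convention that the relevant roots of unity lie in $F$.
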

\begin{proof}
Without a loss of generality one can assume that $G\leq GL(V)$. By Lemma \ref{the caseofcyclicgroup}
one can also assume that $G=G_s$, hence $G$ is diagonalizable. Every element $g\in G$ is represented by a matrix whose diagonal entries are powers of the $q$-th primitive root $\zeta$. This implies the first statement. To show that the upper bound is sharp, it is enough to consider an example when one element $g$ is represented by a matrix whose all diagonal entries are equal to $\zeta$.
\end{proof}
If $G$ is a diagonalizable finite abelian subgroup of $GL(V)$, then using Lemma 3.1 of \cite{hl} we can reduce the computation of $M_{G, V}$ to an integer programming problem. 
In fact, this lemma states that there are invariant monomials 
\[f_1=t_1^{m_1}, f_2=t_1^{v_{12}}t_2^{m_2}, \ldots, f_n=t_1^{v_{1 n}}\ldots t_{n-1, n}^{v_{n-1, n}}t_n^{m_n}\]
of a "triangular shape", where $m_n>0$ and $m_i > v_{i j}\geq 0$ for $1\leq i < j\leq n$, 
such that every invariant monomial from the field of rational invariants $F(V)^G$ is a product of (not necessary non-negative) powers of the monomials $f_1, \ldots, f_n$. 
Since $F[V]^G$ has a basis consisting of invariant monomials, any such monomial has a form  $f_1^{l_1}\ldots f_n^{l_n}$, where $l=(l_1, \ldots , l_n)\in\mathbb{Z}^n$ is a solution of the system of inequalities
\[m_k l_k +\sum_{j> k} v_{k j}l_j\geq 0 \text{ for } 1\leq k\leq n.\]
From here we derive that $M_{G, V}$ is the minimum of the function 
\[\sum_{1\leq i\leq n}(m_i+\sum_{j< i}v_{j i})l_i\]
evaluated on the solution set of the above system of inequalities.   

To illustrate the difficulty of finding a lower bound for $M_{G,V}$, we will determine the value of $M_{G,V}$ explicitly for certain finite subgroups 
$G$ of $GL_2(\mathbb{C})$. 
The list of all finite subgroups of $GL_2(\mathbb{C})$ is presented in \cite{huf}.

Let $G$ be a finite group from Lemma 2.1 of \cite{huf}. 
The group $G$ has two generators
\[A=\left(\begin{array}{cc}
\lambda^{v_1} & 0 \\
0 & \lambda^{j v_2}
\end{array}\right) , \ B=\left(\begin{array}{cc}
\lambda^{g} & 0 \\
0 & \lambda^{dg}
\end{array}\right),\]
where $\lambda$ is an $e$-th primitive root of unity, $v_1, v_2 > 1, v_1 v_2|g, g|e, d|e, gcd(v_1, v_2)=gcd(e, j)=gcd(v_1, d)=gcd(v_2, d)=1$.
Additionally, the number $d$ is square-free and each prime factor of $e$ divides one of the numbers $v_1$, $v_2$ or $d$.
In particular, $G\simeq <A>\times <B>=\mathbb{Z}_{e}\times\mathbb{Z}_{\frac{e}{g}}$.

To calculate $M_G$, we need to consider the following system of congruencies:
\[v_1 a_1+jv_2 a_2\equiv 0\!\!\pmod e,  \ ga_1 +dga_2 \equiv 0\!\!\pmod e, \]
where $a_1, a_2\geq 0$ are such that $a_1+a_2> 0$.
The second congruence implies that $a_1=\frac{et}{g} -da_2$, where $t$ is a positive integer. Substituting the value of $a_1$ into the first congruence we receive
\[\frac{ev_1 t}{g}=(dv_1-jv_2)a_2 \pmod e.\]
Since $gcd(e, dv_1-jv_2)=1$, we obtain that $\frac{ev_1}{g}$ divides $a_2$, which implies that $\frac{ev_2}{g}$ divides $a_1$. Since both $a_1$ and $a_2$ are multiples of 
$\frac{e}{g}$, the second congruence $ga_1 +dga_2 \equiv 0\!\!\pmod e$ can be eliminated from the system since it is automatically satisfied.

Define $a_1=\frac{ev_2}{g} a'_1, a_2=\frac{ev_1}{g}a'_2$. Then
$a'_1 +ja'_2 =0\pmod{\frac{g}{v_1 v_2}}$, or equivalently, $a'_1 +ja'_2 =\frac{gs}{v_1 v_2}$ for some $s> 0$. This congruence has the solution
\[a'_1=\frac{gs(j+1)}{v_1 v_2} -jt, \ a'_2 = -\frac{gs}{v_1 v_2} +t .\]
Since $a'_1, a'_2\geq 0$, the parameter $t$ satisfies
\[\frac{gs}{v_1 v_2}\leq t\leq \frac{gs}{v_1 v_2} +[\frac{gs}{jv_1 v_2}].\]
Additionally,
\[a_1=\frac{es(j+1)}{v_1}-\frac{ev_2 jt}{g} \text{ and } \ a_2= -\frac{es}{v_2} +\frac{ev_1 t}{g}.
\]
Thus
\[a_1+a_2 =\frac{es(j+1)}{v_1}-\frac{es}{v_2}-\frac{et}{g}(v_2 j-v_1).
\]
Finally, observe that for every $s>0$ and for every $t$ such that $\frac{gs}{v_1 v_2}\leq t\leq \frac{gs}{v_1 v_2} +[\frac{gs}{jv_1 v_2}]$, 
the right-hand-side of the above formula for $a_1+a_2$ is greater than zero.

Now are are ready to determine the values of $M_{G,V}$.
\begin{pr}\label{aformula}
Assume $G$ is a finite group from Lemma 2.1 of \cite{huf}, as above. Then the value of $M_{G,v}$ is given as follows.
%\begin{enumerate}
%\item 
If $jv_2 < v_1$, then $M_G=\frac{e}{v_1}$.
%\item 
If $jv_2 > v_1$, then $M_G=\min\{\min\limits_{0< s < j}\{\frac{s}{v_1}-[\frac{gs}{jv_1 v_2}]\frac{e(v_2 j -v_1)}{g}\}, \frac{e}{v_2}\}$.
%\end{enumerate}
\end{pr}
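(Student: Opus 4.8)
The plan is to turn the computation of $M_G$ into the integer-optimization problem already prepared in the paragraphs preceding the proposition. Since $G=\langle A,B\rangle$ is diagonalizable and abelian, the same argument as in Lemma \ref{suchandsuch} shows that $F[V]^G$ is spanned by the invariant monomials $x_1^{a_1}x_2^{a_2}$, and such a monomial is invariant exactly when $(a_1,a_2)$ satisfies the two congruences $v_1a_1+jv_2a_2\equiv 0\pmod e$ and $ga_1+dga_2\equiv 0\pmod e$. Hence $M_G=\min\{a_1+a_2>0\}$ over the nonnegative solutions of this system. First I would record that the preceding reduction sets up a bijection between these solutions and the pairs $(s,t)$ with $s\ge 1$ and $\frac{gs}{v_1v_2}\le t\le \frac{gs}{v_1v_2}+[\frac{gs}{jv_1v_2}]$, under which $a_1+a_2=\frac{es(j+1)}{v_1}-\frac{es}{v_2}-\frac{et}{g}(v_2j-v_1)$. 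So it remains only to minimize this expression, which is affine in $t$, over the feasible region.

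The next step is the dichotomy on the sign of the coefficient $-\frac{e}{g}(v_2j-v_1)$ of $t$. Note that $jv_2=v_1$ cannot occur: with $\gcd(v_1,v_2)=1$ it would force $v_2\mid v_1$, hence $v_2=1$, contradicting $v_2>1$; so the two cases $jv_2<v_1$ and $jv_2>v_1$ are exhaustive. When $jv_2<v_1$ the coefficient of $t$ is positive, so for each $s$ the minimum is attained at the lower endpoint $t=\frac{gs}{v_1v_2}$ (an integer because $v_1v_2\mid g$). Substituting this value, the three terms collapse — the corrections cancel via $v_2(j+1)-v_1-(v_2j-v_1)=v_2$ — to $a_1+a_2=\frac{es}{v_1}$, which is increasing in $s$; its minimum over $s\ge 1$ is at $s=1$, giving $M_G=\frac{e}{v_1}$. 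I would double-check this by exhibiting the realizing invariant $x_1^{e/v_1}$ directly.

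For $jv_2>v_1$ the coefficient of $t$ is negative, so the minimum over $t$ is at the upper endpoint $t=\frac{gs}{v_1v_2}+[\frac{gs}{jv_1v_2}]$, and after the same collapse one obtains $a_1+a_2=F(s):=\frac{es}{v_1}-\frac{e(v_2j-v_1)}{g}[\frac{gs}{jv_1v_2}]$. It then remains to minimize $F(s)$ over $s\ge 1$. The main device is to write $s=qj+r$ with $0\le r<j$ and use that $\frac{g}{v_1v_2}$ is an integer to split the floor as $[\frac{gs}{jv_1v_2}]=\frac{gq}{v_1v_2}+[\frac{gr}{jv_1v_2}]$; the identity $jv_2-(v_2j-v_1)=v_1$ then yields the clean decomposition $F(s)=\frac{eq}{v_2}+F(r)$. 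Since $\frac{e}{v_2}>0$, for $0<r<j$ the minimum is taken at $q=0$ and equals $F(r)$, while for $r=0$ the smallest admissible value is $q=1$ and equals $\frac{e}{v_2}$. Assembling these gives $M_G=\min\{\min_{0<s<j}F(s),\ \frac{e}{v_2}\}$, as claimed.

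The step I expect to be the crux is precisely this reduction $F(s)=\frac{eq}{v_2}+F(r)$, together with the two endpoint collapses to $\frac{es}{v_1}$: all of these rely on the arithmetic relations among $v_1,v_2,j,g,e$ (especially $v_1v_2\mid g$ and $jv_2-(v_2j-v_1)=v_1$) conspiring so that the ``continuous'' parts telescope and only a bounded floor-term survives. A secondary point that must be checked carefully is that the $(s,t)$-parametrization is genuinely bijective onto the nonnegative solution set, so that optimizing at an endpoint of the $t$-interval does not skip the true minimizer; this is what guarantees the endpoint analysis actually computes $M_G$ rather than a mere upper bound.
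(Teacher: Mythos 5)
Your argument is correct and follows essentially the same route as the paper: you use the already-established $(s,t)$-parametrization, minimize the affine function of $t$ at the appropriate endpoint depending on the sign of $v_2j-v_1$, and then split $s=qj+r$ to reduce to $0<s<j$ plus the $\frac{e}{v_2}$ term exactly as the paper does with $s=jl+s'$. Your extra checks (that $jv_2=v_1$ is impossible, and that $x_1^{e/v_1}$ realizes the bound) are sound, and your formula $F(s)=\frac{es}{v_1}-[\frac{gs}{jv_1v_2}]\frac{e(v_2j-v_1)}{g}$ correctly carries the factor $e$ that is dropped (apparently a typo) in the paper's displayed statement $\frac{s}{v_1}$.
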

\begin{proof}
If $jv_2 < v_1$, and $s$ is fixed, then the minimum of such $a_1+a_2$ equals $\frac{es}{v_1}$ and is attained for $t=\frac{gs}{v_1 v_2}$. Therefore
$M_G=\min\{a_1 +a_2\}=\frac{e}{v_1}$.

If $jv_2 > v_1$, and $s$ is fixed, then the minimum of such $a_1+a_2$ equals $\frac{es}{v_1}-[\frac{gs}{jv_1 v_2}]\frac{e(v_2 j -v_1)}{g}$ and is attained for 
$t=\frac{gs}{v_1 v_2}+[\frac{gs}{jv_1 v_2}]$.

If $s=jl+s'$, where $0\leq s' < j$, then $[\frac{gs}{jv_1 v_2}]=\frac{gl}{v_1 v_2}+[\frac{gs'}{jv_1 v_2}]$. After substituting this into the above expression for $a_1+a_2$ 
we obtain
\[
a_1+a_2=\frac{el}{v_2} +(\frac{s'}{v_1}-[\frac{gs'}{jv_1 v_2}]\frac{e(v_2 j -v_1)}{g}).
\]
If $s'=0$, then the minimum for such $a_1+a_2$ is attained for $l=1$ and it equals to $a_1+a_2=\frac{e}{v_2}$. 
If $s' > 0$, then the minimum for such $a_1+a_2$ is attained for $l=0$ and it equals to 
\[\min\limits_{0<s'<j}\{\frac{s'}{v_1}-[\frac{gs'}{jv_1 v_2}]\frac{e(v_2 j -v_1)}{g}\}.\]
The statement follows by combination of the last two formulas.
\end{proof}
\begin{ex}
The following example shows that not all finite subgroups of $GL(V)$ are small.
Let $G$ be a subgroup of $SL_2(\mathbb{C})$ generated by the matrices
\[
\left(\begin{array}{cc}
-1 & 0 \\
0 & -1
\end{array}\right), 
\frac{1}{2}\left(\begin{array}{cc}
-1+i & 1-i \\
-1-i & -1-i
\end{array}\right), 
\left(\begin{array}{cc}
0 & 1 \\
-1 & 0
\end{array}\right),
\left(\begin{array}{cc}
i & 0 \\
0 & -i
\end{array}\right).
\]
The group $G$ is the group from Lemma 2.3 of \cite{huf} and $V=\mathbb{C}^2$. If $g\in G$ is not an identity matrix, then it has eigenvalues $\lambda$ and $\lambda^{-1}$, where $\lambda\neq 1$ is a root of unity. If $H$ is an abelian subgroup of $G$, then $H_s$ can be conjugated with a subgroup $H'$ of the group of diagonal matrices. 
Thus $x_1 x_2\in \mathbb{C}[V]^{H'}$, 
i.e. $M_H=M_{H_s}\leq 2$. On the other hand, Lemma 4.1 of \cite{huf} (see the first row in the table on page 327) implies $M_G=6$.
\end{ex}

Based on the above discussion, the following problem seems natural.
\begin{quest}\label{question}
Characterize the class of small finite subgroups $G$ of $GL(V)$.
\end{quest}

A more general problem is to estimate the value of $M_G$ for a given finite subgroup $G\leq GL(V)$. There are no general results for the lower bound for $M_G$ but 
the following result of Thompson gives an upper bound for $M_G$ in general. 
\begin{pr}\label{thompson'sresult}
If $G$ is a finite subgroup of $GL_n(\mathbb{C})$ and $G$ has no non-trivial characters, then $M_G\leq 4n^2$.
\end{pr}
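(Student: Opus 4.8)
The plan is to first translate the hypothesis into a usable structural statement and then to produce an explicit low-degree invariant. The condition that $G$ has no non-trivial characters means precisely that every homomorphism $G \to \mathbb{C}^\times$ is trivial; since characters of the finite group $G$ factor through the abelianization $G/[G,G]$, and a non-trivial finite abelian group always admits a non-trivial character, this is equivalent to $G$ being perfect, $G = [G,G]$. I would extract two consequences. First, $\det \circ \rho : G \to \mathbb{C}^\times$ is a character, hence trivial, so $G \subseteq SL_n(\mathbb{C})$. Second, and crucially, every semi-invariant is an honest invariant: if $f \in \mathbb{C}[V]$ satisfies $gf = \chi(g)\,f$ for all $g$ and some function $\chi$, then $\chi \in \mathrm{Hom}(G, \mathbb{C}^\times)$ is trivial, so $gf = f$.

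The construction I would use to exhibit a low-degree invariant is the orbit product of a line. Choose a nonzero linear form $\ell \in V^*$ and let $[\ell_1], \dots, [\ell_m]$ be the distinct points of the $G$-orbit of $[\ell]$ in $\mathbb{P}(V^*)$, with fixed representatives $\ell_i$. Each $g \in G$ permutes these lines, so $g\ell_i = c_i(g)\,\ell_{\sigma_g(i)}$ for scalars $c_i(g) \in \mathbb{C}^\times$, and hence $g \cdot \prod_{i=1}^m \ell_i = \big(\prod_i c_i(g)\big)\prod_i \ell_i$. Thus $f = \prod_{i=1}^m \ell_i$ is a semi-invariant of degree $m$, and by the reduction above it is a genuine non-constant invariant. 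Consequently $M_G \le \mu$, where $\mu$ is the minimal size of a $G$-orbit on $\mathbb{P}(V^*)$. (For instance, for the binary icosahedral group in $SL_2(\mathbb{C})$ the $12$ vertices of the icosahedron form such an orbit, recovering the degree-$12$ invariant, comfortably below $4n^2 = 16$.)

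It then remains to bound $\mu \le 4n^2$, and this is the step I expect to be the main obstacle, since $\mu$ must be controlled uniformly, independently of the possibly very large order of $G$. One route is to locate a line with a large stabilizer: every non-central $g \in G$ (which exists since a non-trivial perfect group is non-abelian) has an eigenline in $\mathbb{P}(V^*)$, and the set of all eigenlines of all elements is a $G$-stable configuration inside which one seeks an orbit of controlled length. A more quantitative alternative is to estimate the partial sums of the Molien coefficients $\dim \mathrm{Sym}^d(V^*)^G = \frac{1}{|G|}\sum_{g\in G} h_d(\omega_1(g), \dots, \omega_n(g))$, where the $\omega_i(g)$ are the eigenvalues of $g$ on $V^*$: isolating the identity term contributes $\frac{1}{|G|}\binom{n+D}{n}$ to $\sum_{d=0}^{D}\dim\mathrm{Sym}^d(V^*)^G$, and one must force the total past $1$ at $D = 4n^2$. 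The heuristic $(n!\,|G|)^{1/n} \sim n^2$ for the extremal perfect linear groups (such as $A_{n+1} \subset SL_n(\mathbb{C})$) shows that $n^2$ is the right order of magnitude; the hard part will be converting this into a rigorous uniform bound, either by bounding $|G|$ for perfect subgroups of $GL_n(\mathbb{C})$ or by exploiting the character-orthogonality cancellations among the non-identity terms.
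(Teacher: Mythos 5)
Your reduction is sound as far as it goes: the hypothesis that $G$ has no non-trivial characters does imply that every semi-invariant is an invariant, and the orbit product of a linear form is a semi-invariant whose degree equals the length of the $G$-orbit of the corresponding point of $\mathbb{P}(V^*)$, so $M_G\le\mu$ where $\mu$ is the minimal orbit length on $\mathbb{P}(V^*)$. But the proof stops exactly where the theorem begins. The entire content of the statement is the uniform bound $\mu\le 4n^2$, independent of $|G|$, and you explicitly leave this as "the main obstacle," offering only two unexecuted strategies: the eigenline configuration (with no mechanism for extracting a short orbit from it) and a Molien-series count (whose non-identity terms you would need to control by precisely the kind of group-theoretic information you do not have). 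The heuristic $(n!\,|G|)^{1/n}\sim n^2$ is not an argument; perfect subgroups of $GL_n(\mathbb{C})$ have unbounded order (e.g. $SL_2(\mathbb{F}_p)$ in dimension growing slowly with $p$), so nothing about the orbit length follows from order considerations without real work. As it stands the proposal establishes only $M_G\le\mu$, not $M_G\le 4n^2$.

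For comparison, the paper does not prove this bound at all: it observes that $M_G$ coincides with the quantity $d_G$ of Thompson's paper \emph{Invariants of finite groups} (J.\ Algebra 69 (1981), 143--145) and quotes Thompson's main theorem $d_G\le 4n^2$. Your reduction to minimal orbit lengths is in fact the opening move of Thompson's own argument, but the bound $4n^2$ rests on substantial group theory that you would need to either reproduce or cite. If you intend a self-contained proof, the missing step is the whole theorem; if you intend to cite, you should cite Thompson directly rather than re-derive the easy half.
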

\begin{proof}
In the notation of the paper \cite{thom}, the integer $M_G$ coincides with $d_G$. The main theorem of \cite{thom} states that $d_G\leq 4n^2$.
\end{proof} 

\section{Minimal degrees of invariants of algebraic groups}

Let $\mathbb{G}$ be an algebraic subgroup of $GL(V)$ and $\mathbb{B}$ be its Borel subgroup. Propositions I.3.4 and I.3.6 of \cite{jan} (see also Theorem 9.1 of \cite{gross}) imply
\[(F[V]\otimes F[\mathbb{G}/\mathbb{B}])^{\mathbb{G}}\simeq F[V]^{\mathbb{B}}.\] 
Since $\mathbb{G}/\mathbb{B}$ is a projective variety, we have $F[\mathbb{G}/\mathbb{B}]=F$. Therefore, $F[V]^{\mathbb{G}}\simeq F[V]^{\mathbb{B}}$ and 
the minimal degrees of invariants $M_{\mathbb{G}, V}$ and $M_{\mathbb{B}, V}$ coincide.

The group $\mathbb B$ is a semi-direct product of a torus $\mathbb T$ and the unipotent radical $\mathbb U$ of $\mathbb B$, i.e. $\mathbb{B}=\mathbb{T}\ltimes\mathbb{U}$. 
For a (finite-dimensional) $\mathbb U$-module $S$, denote by $S_{\mathbb{U}}$ the smallest $\mathbb U$-submodule of $S$ such that $\mathbb U$ acts trivially on $S/S_{\mathbb{U}}$. 

Define a filtration of a $\mathbb U$-module $V$ as
\[0\subseteq V_k\subseteq V_{k-1}\subseteq\ldots\subseteq V_1\subseteq V_0=V,\]
where $V_{i+1}=(V_i)_{\mathbb U}$ for each $0\leq i\leq k$. Since $\mathbb{U}\unlhd\mathbb{B}$, the above filtration is also a filtration of $\mathbb B$-submodules. 
One can verify easily that $(V/V_1)^*=(V^*)^{\mathbb U}$, which implies that $F[V/V_1]$ is a $\mathbb B$-invariant subalgebra of $F[V]$ such that $F[V/V_1]\subseteq F[V]^{\mathbb U}$. 
\begin{pr}\label{boundsviatorus}
The minimal degrees of invariants of $G$ and $T$ are related in the following way.
\[M_{\mathbb{T}, V}\leq M_{\mathbb{G}, V}=M_{\mathbb{B}, V}\leq M_{\mathbb{T}, V/V_1}.\]
\end{pr}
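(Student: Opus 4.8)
The plan is to treat the already-established middle equality $M_{\mathbb{G},V}=M_{\mathbb{B},V}$ as given and to prove the two flanking inequalities independently, using only that $\mathbb{B}=\mathbb{T}\ltimes\mathbb{U}$ together with the properties of the subalgebra $F[V/V_1]$ recorded just before the statement.

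For the left inequality $M_{\mathbb{T},V}\leq M_{\mathbb{B},V}$, I would invoke the elementary monotonicity principle already used in the proof of Lemma~\ref{the caseofcyclicgroup}: since $\mathbb{T}$ is a subgroup of $\mathbb{B}$, every $\mathbb{B}$-invariant is a fortiori a $\mathbb{T}$-invariant, so $F[V]^{\mathbb{B}}\subseteq F[V]^{\mathbb{T}}$. Passing to homogeneous components, $F[V]_d^{\mathbb{B}}\neq 0$ forces $F[V]_d^{\mathbb{T}}\neq 0$, and taking the minimal such positive $d$ yields $M_{\mathbb{T},V}\leq M_{\mathbb{B},V}$ (the inequality being vacuous when $F[V]^{\mathbb{B}}=F$).

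For the right inequality $M_{\mathbb{B},V}\leq M_{\mathbb{T},V/V_1}$, the key observation is that $F[V/V_1]$ is a $\mathbb{B}$-stable subalgebra of $F[V]$ lying inside $F[V]^{\mathbb{U}}$, and that the projection $V\to V/V_1$ is $\mathbb{T}$-equivariant because $V_1$ is a $\mathbb{B}$-submodule, hence $\mathbb{T}$-stable. Consequently pullback identifies $F[V/V_1]$ with the $\mathbb{T}$-algebra of polynomial functions on $V/V_1$ in a degree-preserving way, so that $F[V/V_1]^{\mathbb{T}}$ is exactly the space whose minimal positive degree is $M_{\mathbb{T},V/V_1}$. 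I would then take a nonconstant $\mathbb{T}$-invariant $f$ on $V/V_1$ of minimal degree $M_{\mathbb{T},V/V_1}$; viewed inside $F[V]$ it already lies in $F[V]^{\mathbb{U}}$, so being moreover $\mathbb{T}$-invariant it is automatically invariant under $\mathbb{B}=\mathbb{T}\mathbb{U}$ and of the same degree. This produces a nonconstant $\mathbb{B}$-invariant in degree $M_{\mathbb{T},V/V_1}$, whence $M_{\mathbb{B},V}\leq M_{\mathbb{T},V/V_1}$ (again vacuously true if $V/V_1$ carries no nonconstant $\mathbb{T}$-invariant).

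The one point deserving care is the identity $F[V/V_1]^{\mathbb{B}}=F[V/V_1]^{\mathbb{T}}$, which rests on $\mathbb{U}$ acting trivially on the entire subalgebra $F[V/V_1]$; this in turn follows from $(V/V_1)^*=(V^*)^{\mathbb{U}}$ and the fact that $F[V/V_1]$ is generated by these $\mathbb{U}$-fixed linear forms. I expect the only genuine subtlety to be bookkeeping, namely confirming that $\mathbb{T}$ really acts on $V/V_1$ and that degrees are preserved under the identification of $F[V/V_1]$ with functions on the quotient, rather than any substantial difficulty.
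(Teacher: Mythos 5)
Your argument is correct and coincides with the paper's own proof: the left inequality is the trivial subgroup monotonicity, and the right inequality is exactly the containment $F[V/V_1]^{\mathbb{T}}\subseteq (F[V]^{\mathbb{U}})^{\mathbb{T}}=F[V]^{\mathbb{B}}$ that the paper uses. You merely spell out in more detail the degree-preserving identification of $F[V/V_1]$ with functions on the quotient, which the paper leaves implicit.
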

 \begin{proof}
First inequality is trivial. For the second inequality, first observe that $F[V]^{\mathbb B}=(F[V]^{\mathbb U})^{\mathbb T}$. Therefore, 
$F[V/V_1]^{\mathbb T}\subseteq F[V]^{\mathbb B}$ which implies $M_{\mathbb{B}, V}\leq M_{\mathbb{T}, V/V_1}$. 
\end{proof}
The second inequality in the above proposition is sharp. In fact, if $\mathbb{U}$ coincides with the centralizer of the flag $V_k\subseteq V_{k-1}\subseteq\ldots\subseteq V_1\subseteq V$, then $\mathbb U$ is good in the sense of \cite{pom}. Furthermore, Theorem 4.2 of \cite{pom} implies that $F[V]^{\mathbb U}=F[V/V_1]$. Thus $F[V]^{\mathbb B}=F[V/V_1]^{\mathbb T}$, hence $M_{\mathbb{G}, V}=M_{\mathbb{B}, V}=M_{\mathbb{T}, V/V_1}=M_{\mathbb{T}, V}$.

An important consequence of the above proposition is that in many cases the minimal degree of invariants of a linear group is controlled by minimal degree of invariants of its suitable abelian subgroup; more precisely, by its diagonalizable subgroup. 

\section{Invariants of supergroups}

Having in mind possible modification of the cryptosystem based on invariants of groups to a cryptosystem based on supergroups, we will define the notion of an invariant of a supergroup.
From now on we assume that the characteristic of the ground field $F$ is different from $2$. 

\subsection{Definitions and actions}

Let $V$ be a superspace, that is a $\mathbb{Z}_2$-graded space with even and odd components $V_0$ and $V_1$,
respectively. If $v\in V_i$, then $i$ is said to be a {\it parity} of $v$ and it is denoted by $|v|$. In what follows, morphisms between two superspaces $V$ and $W$ are assumed to be graded. The tensor product $V\otimes W$ has the natural structure of a superspace given by
$(V\otimes W)_i=\bigoplus\limits_{k+l=i, k, l\in\mathbb{Z}_2} V_k\otimes W_l$. 

A $\mathbb{Z}_2$-graded associative algebra $A$ is called a {\it superalgebra}.
The superalgebra $A$ is said to be {\it supercommutative} if it satisfies $ab=(-1)^{|a||b|}ba$ for all homogeneous elements $a$ and $b$.
For example, any algebra $A$ has the trivial superalgebra structure defined by $A_0=A, A_1=0$. The tensor product $A\otimes B$ of two superalgebras $A$ and $B$
has the superalgebra structure defined by
\[(a\otimes b)(c\otimes d)=(-1)^{|b||c|}ac\otimes bd\]
for $a, c\in A$ and $b, d\in B$.
The category of all supercommutative superalgebras with graded morphisms is denoted by $\mathsf{SAlg}_F$.

A superalgebra $A$ is called a {\it superbialgebra} if it is a coalgebra with the coproduct $\Delta : A\to A\otimes A$ and counit
$\epsilon : A\to F$ such that both $\Delta$ and $\epsilon$ are superalgebra homomorphisms. In what follows we use Sweedler's notation
$\Delta(a)=\sum a_1\otimes a_2$ for $a\in A$. Let $A^+$ denote the (two-sided) superideal $\ker\epsilon$.

A superspace $V$ is called a left/right $A$-{\it supercomodule} if $V$ is a left/right $A$-comodule and the corresponding comodule map
$\tau : V\to V\otimes A$ is a morphism  of superspaces.

A superbialgebra $A$ is called a {\it Hopf superalgebra} if there is a superalgebra endomorphism $s : A\to A$ such that 
$\sum a_1s(a_2)=\sum s(a_1)a_2=\epsilon(a)$ for $a\in A$. Additionally, we assume that $s$ is bijective and it satisfies the condition 
$\Delta s=t(s\otimes s)\Delta$, where $t : A\otimes A\to A\otimes A$ is a (supersymmetry) homomorphism defined by 
$a\otimes a'\mapsto (-1)^{|a|||a'}a'\otimes a$ for $a, a'\in A$.

Let $A$ be a supercommutative superalgebra. Then the functor $SSp \ A : \mathsf{SAlg}_F\to \mathsf{Sets}$, defined by 
$SSp \ A (C)=Hom_{\mathsf{SAlg}_F}(A, C)$ for $C\in\mathsf{SAlg}_F$, is called an {\it affine superscheme}. If $X=SSp \ A$ is an affine superscheme, then
$A$ is denoted by $F[X]$ and it is called the {\it coordinate superalgebra} of $X$.

If $A$ is a Hopf superalgebra, then $G=SSp \ A$ is a group functor that is called an {\it affine group superscheme}, or shortly, an {\it affine supergroup}. 
The group structure of $G(C)$ is given by $g_1 g_2 (a)=\sum g_1(a_1)g_2(a_2)$, $g^{-1}=g s$ and $1_{G(C)}=\epsilon$ for $g_1, g_2, g\in G(C)$ and $a\in A$. 
The category of affine supergroups is dual to the category of supercommutative Hopf superalgebras. 
If $F[G]$ is finitely generated, then $G$ is called an {\it algebraic} supergroup. If $F[G]$ is finite-dimensional, then $G$ is called a {\it finite} supergroup. 

A (closed) subsupergroup $H$ of $G$ is uniquely defined by the Hopf ideal $I_H$ of $F[G]$ such that for every $C\in\mathsf{SAlg}_F$ an element $g\in G(C)$ belongs to $H(C)$ if and only if $g(I_H)=0$. For example, the {\it largest even} subsupergroup $G_{ev}$ of $G$ is defined by the ideal $F[G]F[G]_1$.

The category of left finite-dimensional $G$-supermodules coincides with the category of right $F[G]$-supercomodules. In fact, if $V$ is a right $F[G]$-supercomodule, then $G(C)$ acts on $V\otimes C$ by $C$-linear transformation
$g(v\otimes 1)=\sum v_1\otimes g(a_2)$ for $g\in G(C)$ and $\tau(v)=\sum v_1\otimes a_2$.

Let $V$ be a superspace such that $\dim V_0 =m$ and $\dim V_1 =n$. The superspace $V$ corresponds to an affine superscheme $A^{m|n}$, called the {\it affine superspace} of (super)dimension $m|n$, such that $A^{m|n}(C)=C_0^m\oplus C_1^n$ for every $C\in\mathsf{SAlg}_F$. The affine superscheme $A^{m|n}$ can be identified with the functor $(V\otimes ?)_0$. In fact, choose a homogeneous basis consisting of elements $v_i$ such that $|v_i|=0$ for $1\leq i\leq m$ and $|v_i|=1$ for $m+1\leq i\leq m+n$. Then every element $w$ of $(V\otimes C)_0$ has the form $w=\sum_{1\leq i\leq m+n} v_i\otimes c_i$, where $|c_i|=|v_i|$. 

The coordinate superalgebra of $A^{m|n}$ is isomorphic to the polynomial superalgebra freely generated by the dual basis $x_i$ of $V^*$ such that 
$x_i(v_j)=\delta_{ij}$ for $1\leq i, j\leq m+n$. In other words, $w(x_i)=x_i(w)=c_i$ for every $w=\sum_{1\leq i\leq m+n} v_i\otimes c_i\in (V\otimes C)_0$ and $C\in\mathsf{SAlg}_F$. 
In order to make the notation consistent, we will also denote $F[A^{m|n}]$ by $F[V]$.

Every $g\in G(C)$ induces an even operator on the $F$-superspace $V\otimes C$. Thus $(V\otimes C)_0$ is a $G(C)$-submodule of $V\otimes C$. Since this action is functorial, it gives
the left $G$-action on the affine superscheme $A^{m|n}$.
The composition of this action with the inverse morphism $g\mapsto g^{-1}$ defines the right action of $G$ on $A^{m|n}$, which is equivalent to the right coaction of $F[G]$ on $F[V]$. 

Since the comodule map  $F[V]\to F[V]\otimes F[G]$ is a superalgebra homomorphism, the $F[G]$-supercomodule structure of $F[V]$ is defined by $F[G]$-supercomodule structure of $V^*=\sum_{1\leq i\leq m+n} Fx_i$.
If $\tau(v_i)=\sum_{1\leq k\leq t} v_k\otimes a_{ki}$ for $1\leq i\leq m+n$, then 
\[\tau(x_i)=\sum_{1\leq k\leq t} x_k \otimes (-1)^{|v_k|(|v_i|+|v_k|)}s(a_{ik}).\] 

There is a natural pairing $(F[V]\otimes C)\times (V\otimes C)\to C$ given by 
\[(f\otimes a)(v\otimes b)=(-1)^{|a||v|} f(v)ab=(-1)^{|a||v|}v(f)ab\] 
for $a, b\in C$ and $C\in\mathsf{SAlg}_F$, such that the above coaction is equivalent to the standard action 
$(g(f\otimes a))(v\otimes b)=(f\otimes a)(g^{-1}(v\otimes b))$ for $g\in G(C)$.

\subsection{Cryptology application}

The invariants of supergroups have two possible applications in the design of public-key cryptosystem.
The first option is to work with {\it relative} invariants from the $C$-superalgebra $C[V]^{G(C)}=(F[V]\otimes C)^{G(C)}$ for some superalgebra $C\in A\in\mathsf{SAlg}_F$.
The second option is to work with {\it absolute} invariants from the superalgebra $F[V]^G$, consisting of all $f\in F[V]$ such that $\tau(f)=f\otimes 1$, or equivalently, 
$g(f\otimes 1)=f\otimes 1$ for every $g\in G(C)$ and $C\in\mathsf{SAlg}_F$.
We will leave a consideration of these options for the future.

\section{Invariants of certain supergroups}

We will now investigate the structure of invariants of certain supergroups $G$. We will establish, in contrast to the case of diagonalizable groups, that generators of invariants of 
$G$ are not given by monomials.

Recall that every diagonalizable algebraic group is isomorphic to a finite product of copies of the one-dimensional torus $G_m$ and groups $\mu_n$, where
$\mu_n$ is the $n$-th roots of unity and $n>1$. Here $\mu_n(C)=\{c\in C^{\times} | c^n=1\}$ for every commutative algebra $C$ (see Theorem 2.2 of \cite{water}).

Let $D$ be a diagonalizable algebraic group and $X=X(D)$ be the character group of $D$. Then $F[D]=FX$ is a group algebra of $X$. 
The Lie algebra $Lie(D)$ can be identified with the subspace of $F[D]^*=(FX)^*$ consisting of all linear maps 
$y : FX\to F$ such that $y(g_1 g_2)=y(g_1)+y(g_2)$ for every $g_1, g_2\in X$. 
Fix a pair $(g,x)$, where $g\in X$ and $x\in Lie(D)$ such that if $x\neq 0$ then $g^2=1$. 
Since $char F\neq 2$, we have $y(g)=0$ for every $y\in Lie(D)$.

The following supergroup $D_{g, x}$ was first introduced in \cite{maszub}. 
The coordinate algebra $F[D_{g, x}]$ is isomorphic to $FX\otimes F[z]=FX\oplus (F X)z$, where $z$ is odd and $z^2=0$. 
The Hopf superalgebra structure on $F[D_{g, x}]$ is defined as:
\[\Delta(h)=h\otimes h + x(h)hz\otimes hgz, \ \Delta(z)=1\otimes z+z\otimes g, \ \epsilon(z)=0, \ \epsilon(h)=1,\]
$s(h)=h^{-1}$ for $h\in X$ and $s(z)=-g^{-1}z$. 

Denote $Fh\oplus Fhz$ by $L(h)$. Then every $L(h)$ is an indecomposable injective $D_{g, x}$-supersubmodule of $F[D_{g, x}]$ and  $F[D_{g, x}]=\oplus_{h\in X} L(h)$.
Let $Y$ denote $\{h\in X| x(h)=0\}$. 
The supermodule $L(h)$ is irreducible if and only if $h\not\in Y$. If $L(h)$ is not irreducible, then it has the socle $S(h)=Fh$ and $L(h)/S(h)\simeq \Pi S(gh)$.

If we denote the basis elements $h$ and $hz$ of $L(h)$ by $f_0$ and $f_1$ respectively, then
\[\tau(f_0)=f_0\otimes h + x(h)f_1\otimes hgz \text{ and } \tau(f_1)=f_0\otimes hz +f_1\otimes hg.\]
Also, $L(h)^*\simeq \Pi L(g^{-1}h^{-1})$ and $S(h)^*\simeq S(h^{-1})$.
\begin{pr}\label{irred}(Proposition 5.1 of \cite{maszub})
Every irreducible $D_{g, x}$-supermodule is isomorphic either to $L(h)$ for $h\not\in Y$ or to $S(h)$ for $h\in Y$. Moreover, every finite-dimensional $D_{g, x}$-supermodule
is isomorphic to a direct sum of (not necessary irreducible) supermodules $\Pi^a L(h)$ and $\Pi^b S(h')$ for $h\in X, h'\in Y$ and $a, b=0, 1$.
\end{pr}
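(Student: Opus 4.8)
The plan is to convert the supercomodule data into a weight grading plus a single odd operator, classify the resulting linear-algebra objects, and then translate back. The key technical identity to extract is a square relation on the odd operator.

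First I would restrict an arbitrary finite-dimensional $D_{g,x}$-supermodule $V$ to the largest even subsupergroup. Since $z^2=0$, the defining ideal $F[D_{g,x}]F[D_{g,x}]_1$ equals $(FX)z$, so $(D_{g,x})_{ev}=D$ and the restriction is an $FX$-comodule, i.e. an $X$-graded superspace $V=\bigoplus_{h\in X}V_h$. Splitting the coaction as $\tau=\tau_0+\tau_1$ along $F[D_{g,x}]=FX\oplus(FX)z$, the even part $\tau_0$ recovers this grading while the odd part $\tau_1$ encodes a single parity-reversing linear operator $\phi\colon V\to V$ via $\tau(v)=v\otimes h+\phi(v)\otimes hg^{-1}z$ for $v\in V_h$. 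Comparing $(\mathrm{id}\otimes\Delta)\tau$ with $(\tau\otimes\mathrm{id})\tau$, using $\epsilon(hz)=0$ and the computed coproduct $\Delta(hz)=h\otimes hz+hz\otimes hg$, I expect exactly three constraints: that $\phi$ shifts weights by $g^{-1}$, i.e. $\phi(V_h)\subseteq V_{hg^{-1}}$; that $\phi$ is odd; and the crucial relation $\phi^2|_{V_h}=x(h)\,\mathrm{id}_{V_h}$. Conversely such data reconstructs the supercomodule, so the category of $D_{g,x}$-supermodules is equivalent to that of $X$-graded superspaces equipped with an odd $\phi$ satisfying $\phi(V_h)\subseteq V_{hg^{-1}}$ and $\phi^2=x(h)$ on $V_h$. (The consistency of the square relation uses $x(g)=0$, which holds because $y(g)=0$ for all $y\in Lie(D)$ and $\mathrm{char}\,F\neq 2$.)

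With this dictionary the irreducibles are immediate. In a simple module, a nonzero weight vector $v\in V_h$ generates the graded, $\phi$-stable subspace $\langle v,\phi(v)\rangle$ (stable since $\phi^2(v)=x(h)v$), which must then be all of $V$; hence $\dim V\le 2$. A one-dimensional module forces $\phi=0$ and therefore $x(h)=0$, giving $S(h)$ with $h\in Y$ (up to parity). A two-dimensional module $\langle v,\phi(v)\rangle$ with $v\in V_h$ is simple precisely when $x(h)\neq 0$: otherwise $\phi^2(v)=0$ makes $F\phi(v)$ a proper submodule, while for $x(h)\neq0$ the two weight lines $h$ and $hg^{-1}$ are genuinely interlocked by the invertible $\phi$, and the module is isomorphic to $L(h)$ with $h\notin Y$ (up to parity). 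This yields the first assertion.

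For the decomposition I would work one $g$-orbit of weights at a time, on each $\phi$-stable summand $W=\bigoplus_i V_{hg^{-i}}$. If $x(h)\neq 0$ then $\phi^2$ is a nonzero scalar, so $\phi$ is invertible on $W$; choosing a homogeneous basis of a single weight space and pairing each basis vector $v$ with $\phi(v)$ exhibits $W$ as a direct sum of copies of $\Pi^aL(h)$. If $x(h)=0$ then $\phi^2=0$, and a homogeneous Jordan decomposition of the nilpotent operator $\phi$ (all of whose blocks have size at most two) splits $W$ into two-dimensional pieces $\Pi^aL(h')$ and one-dimensional pieces $\Pi^bS(h')$, with every such $h'$ again lying in $Y$. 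Summing over orbits gives the stated direct-sum decomposition. Conceptually the same conclusion is forced by the fact, recalled above, that $F[D_{g,x}]=\bigoplus_h L(h)$ is a direct sum of indecomposable injectives: each $\Pi^aL(h)$ is an injective hull, and every finite-dimensional supermodule decomposes as a finite direct sum of such hulls together with the simple summands $\Pi^bS(h')$.

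The main obstacle I anticipate is the sign bookkeeping in the coassociativity computation that produces $\phi^2|_{V_h}=x(h)\,\mathrm{id}_{V_h}$: the Koszul signs from the multiplication in $A\otimes A$, the vanishing of terms containing $z^2$, and the tracking of $g^{\pm1}$ must all be handled carefully, and it is exactly here that the hypotheses $\mathrm{char}\,F\neq 2$ and $g^2=1$ (when $x\neq0$) are genuinely used. Once this square relation is established, both the classification of irreducibles and the complete decomposition reduce to the elementary representation theory of an odd operator whose square is a scalar on each weight space.
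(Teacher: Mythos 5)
Your argument is essentially correct, but there is nothing in the paper to compare it against: the authors do not prove this proposition at all, they import it verbatim as Proposition 5.1 of \cite{maszub}. So your proposal supplies a self-contained proof where the paper offers only a citation. The core of your approach --- the equivalence between finite-dimensional $D_{g,x}$-supermodules and $X$-graded superspaces carrying an odd operator $\phi$ of weight $g^{-1}$ with $\phi^{2}|_{V_h}=x(h)\,\mathrm{id}$ --- checks out: the projection $FX\oplus(FX)z\to FX$ is a coalgebra map (so the even part of $\tau$ does give the $X$-grading), the coproduct $\Delta(hz)=h\otimes hz+hz\otimes hg$ is as you state, and in the coassociativity comparison the term $\phi(v)\otimes h'z\otimes h$ versus $\phi(v)\otimes h'z\otimes h'g$ forces $h'=hg^{-1}$, while matching $\phi^{2}(v)\otimes hg^{-2}z\otimes hg^{-1}z$ against $x(h)v\otimes hz\otimes hgz$ yields the square relation (using $g^{2}=1$ when $x\neq0$, and reading $0=0$ when $x=0$); no Koszul signs actually intervene because $\mathrm{id}$ and $\Delta$ are even. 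The classification of simples and the orbit-by-orbit decomposition (invertible $\phi$ when $x$ is nonzero on the orbit, square-zero $\phi$ with homogeneous Jordan blocks of size at most two otherwise, noting that $x$ is constant on each $\langle g\rangle$-coset because $x(g)=0$) then go through exactly as you describe. Two small points to tighten in a written version: start from the general ansatz $\tau_1(v)=\sum_{h'}w_{h'}(v)\otimes h'z$ and derive, rather than assume, that only $h'=hg^{-1}$ survives; and note that with even morphisms the first sentence of the statement classifies simples only up to the parity shift $\Pi$, as your parenthetical ``up to parity'' already acknowledges and as the ``moreover'' clause of the statement implicitly confirms.
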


Consider a (finite-dimensional) $D_{g, x}$-supermodule $V$
such that $V^*\simeq V(h_1)\oplus\ldots\oplus V(h_s)$.
The superalgebra $F[V]$ is generated by the elements
$f_{j, 0}$ and $f_{j, 1}$, for $1\leq j\leq s$, such that $|f_{j, 0}|=0, |f_{j, 1}|=1$ and
\[\tau(f_{j, 0})=f_{j, 0}\otimes h_j +x(h_j)f_{j, 1}\otimes h_j gz \text{ and } \tau(f_{j, 1})=f_{j, 0}\otimes h_j z+ f_{j, 1}\otimes h_j g.\]

Let $l=(l_1, \ldots, l_s)$ be a vector with non-negative integer coordinates and let $J$ be a subset of $\underline{s}=\{1, 2, \ldots, s\}$.
Denote $f_0^l=\prod_{1\leq j\leq s}f_{j, 0}^{l_j}$, $f_1^J=\prod_{j\in J}f_{j, 1}$, $h^l=\prod_{1\leq j\leq s} h_j^{l_j}$ and $h^J=\prod_{j\in J}h_j$. 
For $1\leq j\leq s$ let $\epsilon_j$ denote the vector that has the $j$-th coordinate equal to $1$ and all remaining coordinates equal to zero.

For a basis monomial $f_0^l f_1^J$ we have 
\[\begin{aligned}\tau(f_0^l f_1^J)=&
(f_0^l\otimes h^l+\sum_{1\leq j\leq s}l_j x(h_j)f_0^{l-\epsilon_j}f_{j, 1}\otimes h^l gz)\times \\
&(f_1^J\otimes h^J g^{|J|}+\sum_{j\in J} (-1)^{k_{j, J}}f_{j, 0}f_1^{J\setminus j}\otimes h^J g^{|J|-1}z) \\
=&f_0^l f_1^J\otimes h^l h^J g^{|J|}+\sum_{j\not\in J}(-1)^{k_{j, J\cup j}}l_j x(h_j)f_0^{l-\epsilon_j}f_1^{J\cup j} \otimes h^l h^J g^{|J|+1}z \\
&+\sum_{j\in J}(-1)^{k_{j, J}} f_0^{l+\epsilon_j}f_1^{J\setminus j}\otimes h^l h^J g^{|J|-1}z,
\end{aligned}\] 
where $k_{j, J}$ is the number of elements $j'\in J$ such that $j' > j$.
Since $g^{|J|+1}=g^{|J|-1}$, this implies the following proposition.
\begin{pr}\label{descriptionofinv}
A (super)polynomial $f=\sum_{l, J}a_{l, J}f_0^l f_1^J$ belongs to $F[V]^{D{g, x}}$ if and only if the following conditions are satisfied.
\begin{enumerate}
\item If $a_{l, J}\neq 0$, then $h^l h^J g^{|J|}=1$,
\item The polynomial 
\[\sum_{l, J}a_{l, J}(\sum_{j\not\in J}(-1)^{k_{j, J\cup j}}l_j x(h_j)f_0^{l-\epsilon_j}f_1^{J\cup j}+\sum_{j\in J}(-1)^{k_{j, J}} f_0^{l+\epsilon_j}f_1^{J\setminus j})\] 
vanishes. 
\end{enumerate}
\end{pr}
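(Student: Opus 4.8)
The plan is to use the defining characterization of an absolute invariant — that $f\in F[V]^{D_{g,x}}$ if and only if $\tau(f)=f\otimes 1$ — and to read off conditions (1) and (2) by comparing coefficients in the target superalgebra $F[V]\otimes F[D_{g,x}]$. The essential structural fact I would exploit is that $F[D_{g,x}]=FX\oplus (FX)z$ splits into a \emph{group-algebra part} $FX$ and a \emph{$z$-part} $(FX)z$, while $F[V]$ has the homogeneous monomial basis $\{f_0^l f_1^J\}$ indexed by $l\in\mathbb{Z}_{\geq 0}^s$ and $J\subseteq\underline{s}$. Hence $F[V]\otimes F[D_{g,x}]$ has the basis $\{f_0^l f_1^J\otimes h,\ f_0^l f_1^J\otimes hz : h\in X\}$, and invariance becomes a system of coefficient equations.

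First I would extend $\tau$ to the whole superpolynomial $f=\sum_{l,J}a_{l,J}f_0^l f_1^J$ by linearity, substituting the expression for $\tau(f_0^l f_1^J)$ computed just above the proposition. Using $g^2=1$, so that $g^{|J|+1}=g^{|J|-1}$, this writes $\tau(f)$ as a group-algebra part
\[\sum_{l,J}a_{l,J}\,f_0^l f_1^J\otimes h^l h^J g^{|J|}\]
lying in $F[V]\otimes FX$, plus a $z$-part
\[\sum_{l,J}a_{l,J}\Big(\sum_{j\notin J}(-1)^{k_{j,J\cup j}}l_j x(h_j)f_0^{l-\epsilon_j}f_1^{J\cup j}+\sum_{j\in J}(-1)^{k_{j,J}}f_0^{l+\epsilon_j}f_1^{J\setminus j}\Big)\otimes h^l h^J g^{|J|-1}z\]
lying in $F[V]\otimes (FX)z$. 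Since $f\otimes 1$ has no $z$-part, the equation $\tau(f)=f\otimes 1$ is equivalent to the pair of requirements that the group-algebra part equal $f\otimes 1$ and that the $z$-part vanish.

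Next I would handle the group-algebra part. Because the monomials $f_0^l f_1^J$ are linearly independent in $F[V]$ and distinct group elements are linearly independent in $FX$, comparing the coefficient of each basis vector $f_0^l f_1^J\otimes h$ shows that the group-algebra part equals $\sum_{l,J}a_{l,J}f_0^l f_1^J\otimes 1$ exactly when, for every $(l,J)$ with $a_{l,J}\neq 0$, one has $h^l h^J g^{|J|}=1$. This is precisely condition (1).

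Finally I would treat the $z$-part, and this is where condition (1) does the real work — the one point that needs care is that \emph{a priori} the group elements $h^l h^J g^{|J|-1}$ attached to different index pairs need not coincide, so the vanishing of the $z$-part is not immediately a single polynomial identity. However, once condition (1) is imposed, $h^l h^J g^{|J|}=1$ forces $h^l h^J g^{|J|-1}=g^{-1}$ for every contributing $(l,J)$, so the common factor $g^{-1}z$ may be pulled out and the $z$-part collapses to $P\otimes g^{-1}z$, where $P$ is the polynomial of condition (2); since $g^{-1}z\neq 0$ in $F[D_{g,x}]$, the $z$-part vanishes if and only if $P=0$. Assembling the two directions: if $f$ is invariant, the group-algebra comparison yields (1), and then the $z$-part computation yields (2); conversely, (1) makes the group-algebra part equal $f\otimes 1$, and (1) together with (2) makes the $z$-part vanish, so $\tau(f)=f\otimes 1$. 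The main obstacle is thus purely organizational — establishing condition (1) first and then using it to homogenize every group element occurring in the $z$-part to the single element $g^{-1}$, which is exactly what reduces the $z$-part to the lone identity $P=0$.
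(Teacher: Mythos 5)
Your proof is correct and follows essentially the same route as the paper, which derives the proposition directly from the displayed computation of $\tau(f_0^l f_1^J)$ together with $g^{|J|+1}=g^{|J|-1}$ and the definition $\tau(f)=f\otimes 1$ of an absolute invariant. Your extra care in noting that condition (1) is what forces every group element $h^l h^J g^{|J|-1}$ occurring in the $z$-part to equal $g^{-1}$, so that the $z$-part collapses to a single polynomial identity, is a detail the paper leaves implicit but is exactly the right justification.
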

We can rewrite the polynomial 
\[\sum_{l, J}a_{l, J}(\sum_{j\not\in J}(-1)^{k_{j, J\cup j}}l_j x(h_j)f_0^{l-\epsilon_j}f_1^{J\cup j}+\sum_{j\in J}(-1)^{k_{j, J}} f_0^{l+\epsilon_j}f_1^{J\setminus j})\] 
from the second condition of the above proposition as 
\[\sum_{l, J} f_0^l f_1^J (\sum_{j\in J}(-1)^{k_{j, J}}(l_j +1)x(h_j)a_{l+\epsilon_j, J\setminus j}+\sum_{j\not\in J}(-1)^{k_{j, J\cup j}} a_{l-\epsilon_j, J\cup j}),\]
where $l_j=0$ implies $a_{l-\epsilon_j, J\cup j}=0$.
\begin{cor}\label{definingequations}
A polynomial $f=\sum_{l, J}a_{l, J}f_0^l f_1^J$ belongs to $F[V]^{D_{g, x}}$ if and only if its coefficients $a_{l,J}$, for all pairs $(l,J)$, satisfy the following equations.
\begin{enumerate}
\item If $h^l h^J g^{|J|}\neq 1$, then $a_{l, J}=0$,
\item $\sum_{j\in J}(-1)^{k_{j, J}}(l_j +1)x(h_j)a_{l+\epsilon_j, J\setminus j}+\sum_{j\not\in J}(-1)^{k_{j, J\cup j}} a_{l-\epsilon_j, J\cup j}=0$.
\end{enumerate}
\end{cor}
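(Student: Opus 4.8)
The plan is to obtain both parts of the corollary directly from Proposition~\ref{descriptionofinv}, with the only real work being the passage from the ``vanishing polynomial'' form of its second condition to a system of scalar equations indexed by the pairs $(l,J)$.

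For the first condition, I would simply observe that statement (1) of the corollary is the contrapositive of statement (1) of Proposition~\ref{descriptionofinv}: asserting that $a_{l,J}\neq 0$ forces $h^l h^J g^{|J|}=1$ is logically identical to asserting that $h^l h^J g^{|J|}\neq 1$ forces $a_{l,J}=0$. So no computation is required here.

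For the second condition, I would use the fact that $F[V]$ is freely generated as a supercommutative superalgebra by the even $f_{j,0}$ and the odd $f_{j,1}$, so that the monomials $f_0^l f_1^J$ (with $l\in\mathbb{Z}_{\geq 0}^s$ and $J\subseteq\underline{s}$) form an $F$-basis; consequently a polynomial vanishes if and only if the coefficient of each such monomial vanishes. I would then collect, for a fixed target monomial $f_0^l f_1^J$, all terms of the polynomial in condition (2) of Proposition~\ref{descriptionofinv} that contribute to it. A term $f_0^{l'-\epsilon_j}f_1^{J'\cup j}$ (arising from an index $j\notin J'$) matches $f_0^l f_1^J$ precisely when $j\in J$, $J'=J\setminus j$ and $l'=l+\epsilon_j$, while a term $f_0^{l'+\epsilon_j}f_1^{J'\setminus j}$ (arising from $j\in J'$) matches precisely when $j\notin J$, $J'=J\cup j$ and $l'=l-\epsilon_j$ (the latter being possible only when $l_j\geq 1$). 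Reading off the two resulting sums recovers exactly the rewritten polynomial displayed before the corollary, whose coefficient of $f_0^l f_1^J$ is
\[\sum_{j\in J}(-1)^{k_{j,J}}(l_j+1)x(h_j)a_{l+\epsilon_j,J\setminus j}+\sum_{j\notin J}(-1)^{k_{j,J\cup j}}a_{l-\epsilon_j,J\cup j}.\]
Setting this coefficient equal to zero for every $(l,J)$ is precisely equation (2), which completes the equivalence.

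The step I expect to require the most care is the index bookkeeping in the paragraph above: one must check that the exponent shift $l'=l+\epsilon_j$ correctly produces the factor $l_j+1$ rather than $l_j$ in the first sum, and that the signs transport correctly under the set operations, namely that the original weight $(-1)^{k_{j,J'\cup j}}$ becomes $(-1)^{k_{j,J}}$ because $J'\cup j=J$, while $(-1)^{k_{j,J'}}$ becomes $(-1)^{k_{j,J\cup j}}$ because $J'=J\cup j$. The convention that $a_{l-\epsilon_j,J\cup j}=0$ whenever $l_j=0$ is exactly what records the absence of a matching term when the exponent shift would leave the non-negative orthant, and I would flag this as the one bookkeeping subtlety worth double-checking.
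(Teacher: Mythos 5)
Your proposal is correct and follows essentially the same route as the paper: the paper derives the corollary from Proposition~\ref{descriptionofinv} by exactly the rewriting you describe, displaying the polynomial of condition (2) regrouped as $\sum_{l,J}f_0^lf_1^J(\cdots)$ with the convention $a_{l-\epsilon_j,J\cup j}=0$ when $l_j=0$, and then reading off the coefficient equations. Your index and sign bookkeeping (the factor $l_j+1$ from $l'_j$ with $l'=l+\epsilon_j$, and the transport of $k_{j,J'\cup j}$ and $k_{j,J'}$ under the substitutions $J'=J\setminus j$ and $J'=J\cup j$) matches the paper's displayed formula.
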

If $s=1$, then $F[V]^{D_{g, x}} =F$. Therefore, from now on we will assume that $s> 1$.

Define the partial operator $P_j$ acting on the set of all pairs $(l,J)$ by
$P_j(l, J)=(l+\epsilon_j , J\setminus j)$ in the case when $j\in J$, and $P_j(l, J)$ is undefined if $j\notin J$.
Also define the partial operator $Q_j$ acting on the set of all pairs $(l,J)$ by
$Q_j(l, J)=(l-\epsilon_j, J\cup j)$ in the case $j\not\in J$ and $l_j>0$, and $Q_j(l,J)$ is undefined if $j\in J$ or $l_j=0$.
\begin{lm}\label{commutativity} The operators $P_j$ and $Q_j$ satisfy the following conditions.
\begin{enumerate}
\item If $P_j$ is defined on $(l, J)$, then $Q_j P_j (l, J)=(l, J)$.
Also, if $Q_j$ is defined on $(l, J)$, then $P_j Q_j (l, J)=(l, J)$,
\item If $j\neq j'$ and $P_j Q_{j'}$ is defined on $(l, J)$, then $P_j Q_{j'}(l, J)=Q_{j'}P_j(l, J)$. 
Also, if $j\neq j'$ and $Q_{j'} P_j$ is defined on $(l, J)$, then $Q_{j'}P_j(l, J)=P_j Q_{j'}(l, J)$.
\end{enumerate}
\end{lm}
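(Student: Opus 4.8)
The plan is to prove all four identities by direct verification, tracking separately the integer vector $l$ and the set $J$ under each operator. Since both $P_j$ and $Q_j$ alter only the $j$-th coordinate of $l$ (by $\pm 1$) and the membership of $j$ in $J$, the heart of the matter is a careful analysis of the domains of definition.

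First I would establish part (1), that $P_j$ and $Q_j$ are mutually inverse. If $P_j$ is defined on $(l,J)$, then $j\in J$ and $P_j(l,J)=(l+\epsilon_j, J\setminus j)$. On this output the set no longer contains $j$ and the $j$-th coordinate equals $l_j+1>0$, so $Q_j$ is defined there; applying it subtracts $\epsilon_j$ and reinserts $j$, recovering exactly $(l,J)$. The reverse composition $P_jQ_j$ follows from the symmetric argument: when $Q_j$ is defined we have $j\notin J$ and $l_j>0$, so $Q_j(l,J)=(l-\epsilon_j, J\cup j)$, on which $P_j$ is defined (the set now contains $j$) and undoes both changes.

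Next I would treat the commutativity in part (2). The key observation is that, for $j\neq j'$, the operator $P_j$ touches only coordinate $j$ of $l$ and the membership of $j$ in $J$, while $Q_{j'}$ touches only coordinate $j'$ and the membership of $j'$; since these act on disjoint pieces of data, the two maps commute at the level of outputs, because addition on distinct coordinates of $l$ commutes and insertion/deletion of distinct elements of $J$ commute. Concretely, both $P_jQ_{j'}(l,J)$ and $Q_{j'}P_j(l,J)$ equal $(l+\epsilon_j-\epsilon_{j'}, (J\setminus j)\cup j')$. The remaining point is that definedness of one composition forces definedness of the other. Assuming $P_jQ_{j'}$ is defined amounts to $j'\notin J$, $l_{j'}>0$, and $j\in J$ (this last because after applying $Q_{j'}$ the set is $J\cup j'$, and membership of $j$ there is equivalent to $j\in J$ as $j\neq j'$); these are precisely the conditions needed for $Q_{j'}P_j$ to be defined, since $P_j$ leaves coordinate $j'$ of $l$ and membership of $j'$ in $J$ untouched. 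The reverse implication is identical with the roles of the two compositions interchanged.

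The main --- indeed the only --- obstacle is the bookkeeping of these domain conditions: one must check that the side-conditions $l_j>0$ and $j\in J$ (or $j\notin J$) are unaffected by the operator carrying the other index $j'\neq j$. Because the two indices are distinct, these conditions decouple completely, so no genuine difficulty arises beyond keeping the cases straight.
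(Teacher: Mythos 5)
Your verification is correct and complete: the domain conditions for each composition are tracked accurately, and the identity $(J\cup j')\setminus j=(J\setminus j)\cup j'$ for $j\neq j'$ is exactly what makes part (2) work. The paper states this lemma without proof, treating it as evident, so your direct case-by-case check simply supplies the routine argument the authors omit.
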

Two pairs $(l, J)$ and $(l', J')$ are called equivalent if there is a chain 
$(l, J)=(l_0, J_0), \ldots, (l_k, J_k)=(l', J')$
such that $(l_{i+1}, J_{i+1})=S_i(l_i, J_i)$ for $0\leq i\leq k-1$ and each $S_i$ is an operator of type $P$ or $Q$.
Lemma \ref{commutativity} implies that this relation is an equivalence and the set of equations from Corollary \ref{definingequations} is a disjoint union of subsets corresponding to these equivalence classes. 

Moreover, each such equivalence class has a unique representative of the form $(l, \underline{s})$ or $(0, J)$, where the cardinality of $J$ is maximal over this class.
In the first case, all pairs from the equivalence class of $(l, \underline{s})$ can be obtained from this representative by appplying operators of type $Q$ only. 
In the second case, all pairs from the equivalence class of $(0, J)$ can be obtained from $(0, J)$ by applying operators of type $P$ only.
\begin{ex}\label{smallexample}
Let $D=G_m$. Since $X(D)\simeq\mathbb{Z}$, we can fix a generator $h$ of $X=X(D)$. Then $x\in Lie(D)$ is determined by the value $x(h)=\alpha\in F$. 
We will describe invariants of $D_{1,x}$ correposponding to the partial case when $s=2$.

Denote $h_1=h^{k_1}, h_2=h^{k_2}$. 
The subset of equations in Corollary \ref{definingequations} corresponding to the pair $(0, \{1\})$ 
is given as \[\alpha k_1 a_{(1, 0), \emptyset}=0=a_{(0, 0), \{1\}} \]
and the subset corresponding to the pair $(0, \{2\})$ is given as
\[\alpha k_2 a_{(0, 1), \emptyset}=0=a_{(0, 0), \{2\}}. \]

The subset of equations, which corresponds to the pair $((l_1, l_2), \{1, 2\})$, consists of the equations
\[\alpha(-(l_1+1)k_1 a_{(l_1+1, l_2), \{2\}}+(l_2+1)k_2 a_{(l_1, l_2 +1), \{1\}})=0 ,
\]
\[\alpha(l_2+1)k_2 a_{(l_1+1, l_2+1), \emptyset}-a_{(l_1, l_2), \{1, 2\}}=0,\]
\[\alpha(l_1+1)k_1 a_{(l_1+1, l_2+1), \emptyset}+a_{(l_1, l_2), \{1, 2\}}=0
\]
and 
\[a_{(l_1+1, l_2), \{2\}}+a_{(l_1, l_2+1), \{1\}}=0.
\]

If $\alpha=0$ and $k_1, k_2\neq 0$, then the superspace $F[V]^{D_{1, x}}$ is generated by the elements $f_0^{l+\epsilon_1+\epsilon_2}$ and 
$f_0^{l+\epsilon_1}f_1^{\{2\}}-f_0^{l+\epsilon_2}f_1^{\{1\}}$ such that $(l_1+1)k_1+(l_2 +1)k_2=0$.

If $\alpha\neq 0$ and $k_1, k_2\neq 0$, then the superspace $F[V]^{D_{1, x}}$ is generated by the elements 
$f_0^{l+\epsilon_1+\epsilon_2}-\alpha (l_1+1) k_1 f_0^l f_1^{\{1, 2\}}=f_0^{l+\epsilon_1+\epsilon_2}+\alpha (l_2+1) k_2 f_0^l f_1^{\{1, 2\}}$ and 
$f_0^{l+\epsilon_1}f_1^{\{2\}}-f_0^{l+\epsilon_2}f_1^{\{1\}}$ such that $(l_1+1)k_1+(l_2 +1)k_2=0$. 

The remaining cases, when $k_1=0$ or $k_2=0$, are left for the reader.
\end{ex}

Next, let us consider $D_{g, x}$, where $D$ is an arbitrary diagonalizable group and the elements $g$ and $x$ are as above. 
Our aim is to estimate $M_{D_{g, x}, V}$ in terms of the minimal degrees of its diagonalizable (purely even) subsupergroups.

\begin{rem}\label{remrem}
Since $\Delta(g)=g\otimes g$, $F< g>$ is a (purely even) Hopf subsuperalgebra of $F[D_{g, x}]$. 
In other words, there is a short exact sequence of supergroups
\[1\to D'_{1, x}\to D_{g, x}\to\mu_2\to 1,\]
where $F<g>\simeq F[\mu_2]$ and $D'$ is the kernel of the restriction of the epimorphism $D_{g, x}\to\mu_2$. 
Additionally, $F[D']=FX/FX(g-1)$ and $Lie(D)=Lie(D')$.

For every $D_{g, x}$-supermodule $V$ we obtain $F[V]^{D_{g, x}}=(F[V]^{D'_{1, x}})^{\mu_2}$. Therefore $f\in F[V]^{D'_{1, x}}$
implies $f^2\in F[V]^{D_{g, x}}$, which yields $M_{D'_{1, x}, V}\leq M_{D_{g, x}, V}\leq 2M_{D'_{1, x}, V}$. 
\end{rem}

Next, we will consider the special case when $g=1$. Since the element $z$ generates a Hopf supersubalgebra of $F[D_{1, x}]$, there is a supergroup epimorphism $D_{1, x}\to SSp \ K[z] \simeq G_a^-$, where $G_a^-$ is a one-dimensional odd unipotent supergroup. The kernel of this epimorphism coincides with $(D_{1, x})_{ev}\simeq D$. 

Assume that $D_{1, x}$ is connected, which happens if and only if $D$ is connected. Then
$F[V]^{D_{1, x}}=F[V]^{Dist(D_{1, x})}$ (see \cite{zub}).

Since $(D_{1, x})_{ev}$ is (naturally) isomorphic to $D$, from now on we will identify it with $D$. 
The restriction of the comodule map $\tau$ is given by $\tau|_{D}(f_{j, a})=f_{j, a}\otimes h_j$ for $1\leq j\leq s$ and $a=0, 1$. 
We also have $F[V]^{D_{1, x}}=(F[V]^D)^{D_{1, x}/D}$.
\begin{lm}\label{image}
There is a short exact sequence
\[0\to \ I\to Dist(D_{1, x})\to Dist(D_{1, x}/D)\to 0,\] 
where the (two-sided) superideal $I$ is generated by $Dist(D)^+$. 
\end{lm}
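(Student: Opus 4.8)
The plan is to realize the arrow $Dist(D_{1,x})\to Dist(D_{1,x}/D)$ as the map functorially induced by the quotient epimorphism $\pi\colon D_{1,x}\to D_{1,x}/D\simeq G_a^-$, and then to compute its kernel by hand from the explicit Hopf structure of $F[D_{1,x}]$. The comorphism of $\pi$ is the inclusion $\pi^*\colon F[G_a^-]=F[z]\hookrightarrow F[D_{1,x}]$ sending $z\mapsto z$, so for a distribution $\mu\in Dist(D_{1,x})$ (a functional on $F[D_{1,x}]$ vanishing on a power of $F[D_{1,x}]^+$) its image is the restriction $\mu\circ\pi^*$, which is determined by the two scalars $\mu(1)$ and $\mu(z)$. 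Since $Dist$ is a functor into cocommutative Hopf superalgebras, this map is a homomorphism of superalgebras, hence its kernel is automatically a two-sided superideal; it then remains to prove surjectivity and to identify the kernel with the ideal generated by $Dist(D)^+$.

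For surjectivity I would identify $Dist(G_a^-)=(F[z])^*=F\epsilon_0\oplus F\partial$ with $\partial$ dual to $z$. The counit $\epsilon$ of $F[D_{1,x}]$ (the unit of $Dist(D_{1,x})$) maps to $\epsilon_0$, while the odd functional $\zeta$ defined by $\zeta(h)=0$ and $\zeta(hz)=1$ for all $h\in X$ is a genuine distribution (it kills $(F[D_{1,x}]^+)^2$) and maps to $\partial$, so the map is onto. The decisive structural input is the decomposition $Dist(D_{1,x})=Dist(D)\oplus Dist(D)\zeta$, in which $Dist(D)$ is embedded as the even distributions vanishing on $(FX)z$. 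I would obtain this by filtering $F[D_{1,x}]$ by powers of its augmentation ideal: writing $I_X=FX^+$ one checks inductively that $(F[D_{1,x}]^+)^n=I_X^n\oplus I_X^{n-1}z$, because $z^2=0$ forces every product of two odd elements to vanish, and dualizing level by level produces exactly the two summands, the right one spanned by the convolution products $\mu\zeta$ with $\mu\in Dist(D)$.

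Granting the decomposition, the conclusion is short. A convolution computation (using $\Delta(hz)=h\otimes hz+hz\otimes h$, itself a consequence of $z^2=0$) gives $(\mu\zeta)(h)=0$ and $(\mu\zeta)(hz)=\mu(h)$ for $\mu\in Dist(D)$. Hence for $\nu=\mu_0+\mu_1\zeta$ one has $\nu(1)=\mu_0(1_X)$ and $\nu(z)=\mu_1(1_X)$, so $\nu$ lies in the kernel exactly when $\mu_0,\mu_1\in Dist(D)^+$; that is, $\ker=Dist(D)^+\oplus Dist(D)^+\zeta$. Every $\mu\in Dist(D)^+$ maps to $\mu(1)\epsilon_0+\mu(z)\partial=0$, so the two-sided ideal $I$ generated by $Dist(D)^+$ satisfies $I\subseteq\ker$; conversely $Dist(D)^+\zeta\subseteq I$ because $I$ is a right ideal and $\zeta\in Dist(D_{1,x})$, whence $\ker=Dist(D)^+\oplus Dist(D)^+\zeta\subseteq I$. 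Therefore $I=\ker$ and the sequence is exact.

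The step I expect to be the main obstacle is establishing the decomposition $Dist(D_{1,x})=Dist(D)\oplus Dist(D)\zeta$, that is, controlling the augmentation filtration and the dual pairing in the super setting with the correct Koszul signs; once this is in place, both surjectivity and the kernel computation are routine. If one prefers to bypass the explicit filtration, the same result follows from the general principle that for a normal closed subsupergroup $N\trianglelefteq G$ one has $Dist(G/N)\cong Dist(G)/Dist(G)Dist(N)^+$ with $Dist(G)Dist(N)^+$ two-sided, applied to $N=D\trianglelefteq G=D_{1,x}$; the computation above is essentially a verification of that principle in the present case.
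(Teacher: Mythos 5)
Your proof is correct and takes essentially the same route as the paper: both hinge on the augmentation-filtration computation $\mathfrak{m}^k=\mathfrak{m}_0^k\oplus\mathfrak{m}_0^{k-1}z$ (your $(F[D_{1,x}]^+)^n=I_X^n\oplus I_X^{n-1}z$) and the resulting decomposition $Dist(D_{1,x})=Dist(D)\oplus Dist(D)\phi$, your $\zeta$ being the paper's $\phi$. The only difference is one of detail: the paper compresses the surjectivity check and the identification of the kernel with the ideal generated by $Dist(D)^+$ into ``the statement follows,'' whereas you write those steps out explicitly.
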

\begin{proof}
Since $\mathfrak{m}=F[D_{1, x}]^+=F(X-1)\oplus (FX)z$, we have $\mathfrak{m}^k=\mathfrak{m}_0^k\oplus\mathfrak{m}_0^{k-1} z$.
Therefore $Dist(D_{1, x})=Dist(D)\oplus Dist(D)\phi$, where $\phi$ is an odd element from $Lie(D_{1, x})=(\mathfrak{m}/\mathfrak{m}^2)^*$ such that $\phi(z)=1$ and $\phi(h)=0$ for 
$h\in X$. Since the image of $\phi$, which equals $\phi|_{F[z]}$, generates $Dist(D_{1, x}/D)$, the statement follows.
\end{proof} 
Denote by $\psi$ the restriction $\phi|_{F[z]}$. Then $Dist(D_{1, x}/D)=F\oplus F\psi$ and $\psi^2=0$. 

Let $A$ denote $F[V]^D$. Then $\psi$ acts on $A$ as $\phi|_A$. Furthermore, $\phi$ acts on $F[V]$ as an odd superderivation such that 
$\phi f_{j, 0}=x(h_j) f_{j, 1}$ and $\phi f_{j, 1}=f_{j, 0}$. Hence $F[V]^{D_{1, x}}=A^{Dist(D_{1, x}/D)}=\{a\in A\mid \psi a=\phi a=0\}$. 

Choose a homogeneous basis $\{v_i\}_{i\in I_1\sqcup I_2}$ of the $\mathbb{N}$-graded space $A$ such that the vectors $\{v_i\}_{i\in I_1}$ form a basis of $\phi A$ and the vectors $\{v_i\}_{i\in I_2}$ form a basis of $A/\phi A$. 
Then $\phi a_i=\sum_{j\in I_1} c_{ij} v_j$ for $i\in I_2,$ and the matrix $C=(c_{ij})_{i\in I_2, j\in I_1}$ is row-finite. Since $\phi A\subseteq\ker\phi$, the following Proposition is now evident. 
\begin{pr}\label{onemoreapproachtoinvariants}
The space $F[V]^{D_{1, x}}$ is generated by the vectors $v_i$ for $i\in I_1$, and by the vectors $\sum_{j\in I_2} d_j v_j$, such that the vector 
$d=(d_j)_{j\in I_2}\in F^{I_2}$ satisfies the equation $dC=0$. Moreover, $\phi$ preserves the degrees, which implies $M_{D, V}=M_{D_{1, x}, V}$. 
\end{pr}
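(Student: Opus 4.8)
The plan is to identify $F[V]^{D_{1,x}}$ with the kernel of the odd operator $\phi$ acting on $A=F[V]^D$, read off the generating set from a basis adapted to the image of $\phi$, and then extract the minimal degree from the fact that $\phi$ is degree-preserving and square-zero on $A$.

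First I would record the one structural fact that makes everything work: $\phi$ restricts to a \emph{square-zero} operator on $A$. Although $\phi^2$ does not vanish on all of $F[V]$ (it scales $f_{j,a}$ by $x(h_j)$), the weight-one monomials $f_0^l f_1^J$ spanning $A$ satisfy $x(h^l h^J)=\sum_j l_j x(h_j)+\sum_{j\in J}x(h_j)=x(1)=0$, so $\phi^2=0$ on $A$; this is exactly the relation $\psi^2=0$ in $Dist(D_{1,x}/D)$ and it justifies the inclusion $\phi A\subseteq\ker\phi$ invoked just before the statement. Since $F[V]^{D_{1,x}}=A^{Dist(D_{1,x}/D)}=\{a\in A\mid \phi a=0\}$, the whole task reduces to describing $\ker(\phi\mid_A)$.

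The generation claim is then pure linear algebra in the chosen homogeneous basis $\{v_i\}_{i\in I_1\sqcup I_2}$. Each $v_i$ with $i\in I_1$ lies in $\phi A\subseteq\ker\phi$, so all of these are invariants. Writing a general element as $a=\sum_{i\in I_1}b_i v_i+\sum_{i\in I_2}d_i v_i$ and applying $\phi$, the first sum dies (its terms are in $\ker\phi$) and the second becomes $\sum_{j\in I_1}\big(\sum_{i\in I_2}d_i c_{ij}\big)v_j$; since $\{v_j\}_{j\in I_1}$ is linearly independent, $\phi a=0$ is equivalent to $dC=0$. This exhibits $\ker\phi$ as spanned by the $v_i$ for $i\in I_1$ together with the combinations $\sum_{i\in I_2}d_i v_i$ with $dC=0$, which is precisely the asserted generating set.

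Finally, for the degree equality I would note that $\phi$ sends each degree-one generator $f_{j,a}$ to a degree-one element, hence as a derivation preserves the polynomial grading and restricts to each graded piece $A_d$. The inclusion $F[V]^{D_{1,x}}=\ker\phi\subseteq A$ gives $M_{D,V}\le M_{D_{1,x},V}$ at once. For the reverse inequality—the only place any argument is needed—I would use that a square-zero operator has nonzero kernel on every nonzero space: if $A_d\neq 0$ but $\ker(\phi\mid_{A_d})=0$, then $\phi\mid_{A_d}$ is injective, yet $\phi^2=0$ forces $\operatorname{im}(\phi\mid_{A_d})\subseteq\ker(\phi\mid_{A_d})=0$, so $\phi\mid_{A_d}=0$ and $A_d=0$, a contradiction. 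Applying this in the minimal degree $d=M_{D,V}$ yields a nonzero $D_{1,x}$-invariant of degree $d$, whence $M_{D_{1,x},V}\le M_{D,V}$ and the two coincide. I do not anticipate a genuine obstacle; the only point to handle with care is \emph{not} to claim $\phi^2=0$ on all of $F[V]$ — it holds only after restricting to the $D$-invariant subalgebra $A$, and that restriction is exactly what guarantees a nonzero kernel in each nonzero degree.
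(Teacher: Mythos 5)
Your proof is correct and takes essentially the same route as the paper, which likewise reduces everything to $F[V]^{D_{1,x}}=\ker(\phi|_A)$ together with the inclusion $\phi A\subseteq\ker\phi$, declares the first (linear-algebra) statement obvious, and gets the degree equality by noting that $\phi$ preserves degrees and applying $\phi$ to a monomial $D$-invariant of minimal degree. Your write-up is in fact a little more careful than the paper's one-line argument: the paper asserts that applying $\phi$ to a minimal-degree monomial invariant yields a \emph{non-zero} $D_{1,x}$-invariant, which silently omits the case $\phi m=0$ (where $m$ itself is already the desired invariant), whereas your square-zero-kernel argument on each graded piece $A_d$ handles both cases uniformly and also supplies the verification that $\phi^2$ vanishes on $A$ (via $x(h^lh^J)=x(1)=0$), a fact the paper uses but does not spell out.
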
 
\begin{proof} The first statement is obvious.
For a given monomial $D$-invariant we can create a (non-zero) $D_{1, x}$-invariant of the same degree just by applying the map $\phi$. 
\end{proof}

Returning back to the case of general $g$, using Proposition \ref{onemoreapproachtoinvariants} and Remark \ref{remrem} we derive the following theorem.

\begin{tr}\label{generalcase}
Assume that $D$ is connected and the subgroup $D'$ of $D$ is as in Remark \ref{remrem}. Then for every $D_{g, x}$-supermodule $V$ there are inequalities
$M_{D', V}\leq M_{D_{g, x}, V}\leq 2M_{D', V}$.
\end{tr}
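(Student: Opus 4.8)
The plan is to prove both inequalities by chaining together two facts already established in the preceding material: the factor-two estimate of Remark~\ref{remrem}, which compares $D_{g,x}$ with the supergroup $D'_{1,x}$ obtained by setting $g=1$, and the exact equality of Proposition~\ref{onemoreapproachtoinvariants}, which compares a connected diagonalizable group with its odd thickening. The only genuinely new point is to confirm that the connectedness hypothesis needed by the Proposition is inherited by $D'$.

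First I would quote Remark~\ref{remrem} directly. It records the short exact sequence $1\to D'_{1,x}\to D_{g,x}\to\mu_2\to 1$, the identification $F[V]^{D_{g,x}}=(F[V]^{D'_{1,x}})^{\mu_2}$, and then, by restricting invariants along the inclusion $D'_{1,x}\subseteq D_{g,x}$ and by squaring a $\mu_2$-homogeneous minimal-degree $D'_{1,x}$-invariant, the estimate $M_{D'_{1,x},V}\le M_{D_{g,x},V}\le 2M_{D'_{1,x},V}$. This sandwich drives the whole argument, so I would simply carry it over.

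Next I would eliminate the thickening from the two outer terms. Applying Proposition~\ref{onemoreapproachtoinvariants} with $D'$ in the role of $D$ is legitimate because $Lie(D')=Lie(D)$ and the relevant odd superderivation $\phi$ again preserves degree, so a nonzero monomial $D'$-invariant is sent to a $D'_{1,x}$-invariant of equal degree. This yields $M_{D',V}=M_{D'_{1,x},V}$, \emph{provided} $D'$ (equivalently $D'_{1,x}$) is connected. Substituting this equality into the sandwich above converts it into $M_{D',V}\le M_{D_{g,x},V}\le 2M_{D',V}$, which is exactly the assertion; after that no computation remains.

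The step I expect to be the crux is verifying that $D'$ is connected, since Proposition~\ref{onemoreapproachtoinvariants} is unavailable otherwise. I would argue at the level of character groups: $X(D')=X/\langle g\rangle$, and a diagonalizable group is connected precisely when its character group has no torsion prime to $\mathrm{char}\,F$. Since $D=(D_{g,x})_{ev}$ is connected, $X$ carries no such torsion; and because $\mathrm{char}\,F\neq 2$ while $g^2=1$, the element $g$ must be trivial, so $X/\langle g\rangle=X$ and $D'=D$ is again connected. With connectedness of $D'$ secured, the two imported statements fit together with no further work, proving the theorem.
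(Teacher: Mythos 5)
Your proof is correct and follows the paper's own route exactly: the published argument consists precisely of combining the sandwich $M_{D'_{1,x},V}\leq M_{D_{g,x},V}\leq 2M_{D'_{1,x},V}$ from Remark~\ref{remrem} with the equality $M_{D',V}=M_{D'_{1,x},V}$ from Proposition~\ref{onemoreapproachtoinvariants} applied to $D'$. The one step you add --- verifying that $D'$ is connected so that the Proposition applies --- is a point the paper passes over in silence, and your verification is sound but reveals something worth flagging: since a diagonalizable group is connected exactly when its character group has no torsion prime to $\mathrm{char}\,F$, the hypotheses ($D$ connected, $\mathrm{char}\,F\neq 2$, $g^2=1$) force $g=1$ and hence $D'=D$, so the theorem as literally stated collapses to Proposition~\ref{onemoreapproachtoinvariants}; the substantive case $g\neq 1$ only survives if the connectedness hypothesis is meant to be imposed on $D'$ rather than on $D$.
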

\begin{quest}\label{greatproblem}
Describe all (absolute) invariants of supergroups $D_{g, x}$ assuming that all invariants of $D$ are known.
\end{quest}


\begin{thebibliography}{99}

\bibitem{burn} W. Burnside, {\em On groups of linear substitutions of finite order which possess quadratic invariants}, Proc. London Math. Soc.  S2-12  no. 1, 89--93.

\bibitem{derkraft} H. Derksen and H. Kraft, {\em Constructive invariant theory}, Algebre non commutative, groupes quantiques et invariants (Reims, 1995), 221--244, Semin. Congr., 2, Soc. Math. France, Paris, 1997.

\bibitem{dima1} D. Grigoriev, {\em Public-key cryptography and invariant theory}, Journal of Mathematical Sciences 126 (2005), no.3, 1152--1157, translated from 
Zapiski Nauchnych Seminarov POMI, 293 (2002), 26--38.

\bibitem{dima2} D. Grigoriev, A. Kojevnikov and S.J. Nikolenko, {\em Algebraic cryptography: New constructions and their security against provable break}, 
St. Peterburg Math. J. 20 (2009), no.6, 937--953, translated from Algebra i Analysis 20 (2008), no.6.

\bibitem{gross} F. D. Grosshans, {\em Algebraic homogeneous spaces and invariant theory}, Lecture Notes in Math., vol.1673, Springer-Verlag, Berlin, 1997. vi+148 pp.

\bibitem{huf}  W. C. Huffman,  {\em Polynomial invariants of finite linear groups of degree two}. Canad. J. Math. 32 (1980), no. 2, 317--330.

\bibitem{hochrob} M. Hochster and J. Roberts, {\em Rings of invariants of reductive groups acting on regular rings are Cohen-Macaulay}, Adv. Math. 13 (1974), 115--175.

\bibitem{jan} J. Jantzen, {\em Representations of algebraic groups}, Pure and Applied Mathematics, 131. Academic Press, Inc., Boston, MA, 1987. xiv+443 pp. 

\bibitem{hl} E. Hubert and G. Labahn, {\em Computation of invariants of finite abelian groups}, Math. of Comp., January 14, 2016, http://dx.doi.org/10.1090/mcom/3076.

\bibitem{hum} J. E. Humphreys, {\em Linear algebraic groups}. Graduate Texts in Mathematics, No. 21. Springer-Verlag, New York-Heidelberg, 1975. xiv+247 pp.

\bibitem{mzj} F. Marko, A. N. Zubkov and M. Jur\' a\v s, {\em Public-key cryptosystem based on invariants of diagonalizable groups}, submitted to the Journal of Mathematical cryptology, 
see also arXiv:1507.07848 [cs.CR]. 

\bibitem{maszub} A. Masuoka and A. N. Zubkov, {\em Solvability and nilpotency for algebraic supergroups}, to appear in J. Pure Applied Algebra, see also arXiv: 1502.07021v1.

\bibitem{noeth} E. Noether, {\em Der endlichkeitssatz der invarianten endlicher gruppen}, Math. Ann. 77 (1916), 89--92.

\bibitem{pom} K.Pommerening, {\em Ordered sets with the standardizing property and straightening laws for algebras of invariants}, Adv. in Math. 63 (1987), 271-290.

\bibitem{smith} L. Smith, {\em Polynomial invariants of finite groups - a survey of recent results}, Bull. Amer. Math. Soc. 34 (1997), no. 3, 211--250.

\bibitem{sym} P. Symonds, {\em On the Castelnuovo-Mumford regularity of rings of polynomial invariants}, Ann. of Math. (2) 174 (2011), no. 1, 499--517.

\bibitem{thom} J.G. Thompson, {\em Invariants of finite groups}, J. Algebra, 69 (1981), 143--145.

\bibitem{water} W.C. Waterhouse, {\em Introduction to affine group schemes}, Graduate Texts in Mathematics, 66. Springer-Verlag, New York-Berlin, 1979. xi+164 pp.

\bibitem{zub} A. N. Zubkov, {\em Affine quotients of supergroups}, Transform. Groups 14 (2009), no.3, 713--745.

\end{thebibliography}
\end{document}